\let\SF@@footnote\footnote
\def\footnote{\ifx\protect\@typeset@protect
    \expandafter\SF@@footnote
  \else
    \expandafter\SF@gobble@opt
  \fi
}
\def\csname SF@gobble@opt \endcsname{\@ifnextchar[
  \SF@gobble@twobracket
  \@gobble
}
\edef\SF@gobble@opt{\noexpand\protect
  \expandafter\noexpand\csname SF@gobble@opt \endcsname}
\def\SF@gobble@twobracket[#1]#2{}
\theoremstyle{plain}
\newtheorem{thm}{Theorem}[chapter]
\newtheorem{lem}[thm]{Lemma}
\newtheorem{defn}[thm]{Definition}
\newtheorem{conj}[thm]{Conjecture}
\newtheorem*{thm*}{Theorem}
\newtheorem*{cor*}{Corollary}
\newtheorem*{lem*}{Lemma}
\newtheorem*{defn*}{Definition}
\newtheorem*{prop*}{Proposition}
\newtheorem*{conj*}{Conjecture}
\begin{document}

\pagestyle{plain}

\frontmatter

\begin{titlepage}

{\Huge $C^{1}$- Stable - Manifolds for Periodic }

\vspace{0.5cm}

{\Huge Heteroclinic Chains in Bianchi IX:}

\vspace{0.5cm}

\begin{center}

{\Huge Symbolic Computations and}

\vspace{0.4cm}

{\Huge  Statistical Properties}

\vspace{0.5cm}
\vspace{1cm}

{\Large Johannes Buchner} 

\vspace{0.5cm}

\today

\end{center}

\vspace{0.5cm}

\vspace{2cm}

{\bf  Abstract}

\vspace{0.5cm}

In this paper we study oscillatory Bianchi models of class A and are able to show that for admissible periodic heteroclinic chains in Bianchi IX there exisist $C^{1}$- stable - manifolds of orbits that follow these chains towards the big bang. A detailed study of Takens Linearization Theorem and the Non-Resonance-Conditions leads us to this new result in Bianchi class A. 
\newline
More precisely, we can show that there are no heteroclinic chains in Bianchi IX with constant continued fraction development that allow Takens-Linearization at all of 
their base points. Geometrically speaking, this excludes "symmetric" heteroclinic chains
with the same number of "bounces" near all of the 3 Taub Points - the result shows that
we have to require some "asymmetry" in the bounces in order to allow for Takens Linearization, e.g. by considering admissible 2-periodic continued fraction developments.
\newline
We conclude by discussing the statistical properties of those solutions, including their topological and measure-theoretic genericity.

\end{titlepage}

\newpage{}

\tableofcontents{}

\newpage{}

\mainmatter

\newpage

\section{Resonances for Periodic Chains in Bianchi IX}
\label{resonances-bix}

We are interested in periodic heteroclinic chains in the
Bianchi IX cosmological model. These can be represented by a Kasner
parameter $u\in\mathbb{R}$ with an infinite periodic continued fraction
representation, e.g. $u=[a,b,c,a,b,c,...]$.

\subsection{Infinite Periodic Continued Fractions}

From the theory of continued fractions, we know that it holds:

\begin{thm*}
$u\in\mathbb{R}$ has an infinite periodic continued fraction representation
$\iff$$u\in\mathbb{R}$ is a ``quadratic irrational''$\iff u$
is a real but irrational root of a quadratic equation with integer
coefficients, i.e. $\exists:c_{1},c_{2},c_{3}:c_{1}+c_{2}u+c_{3}u^{2}=0$
(with $c_{i}\in\mathbb{Z}$$)$.\\
\end{thm*}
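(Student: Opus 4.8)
The plan is to prove the chain of equivalences in three moves. The rightmost equivalence --- ``$u$ is a quadratic irrational'' $\iff$ ``$u$ is a real irrational root of some $c_{1}+c_{2}u+c_{3}u^{2}=0$ with $c_{i}\in\mathbb{Z}$'' --- is essentially the definition of \emph{quadratic irrational}, so there is nothing to do beyond recording that a quadratic with rational coefficients can be cleared to one with integer coefficients. The substance lies in the two implications linking periodicity of the continued fraction to the algebraic condition.

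For the direction ``periodic $\Rightarrow$ quadratic irrational'' (Euler's half), I would first reduce to a purely periodic tail. Writing $u=[a_{0},a_{1},\ldots,a_{m-1},\overline{b_{0},\ldots,b_{k-1}}]$ and $v=[\overline{b_{0},\ldots,b_{k-1}}]$, the standard convergent recursion gives $v=(pv+p')/(qv+q')$, where $p/q$ and $p'/q'$ are the last two convergents of one period; clearing denominators yields a quadratic over $\mathbb{Z}$ satisfied by $v$, and $v$ is irrational since its continued fraction is infinite (a finite one would be rational). Then $u=(Pv+P')/(Qv+Q')$ with integer $P,P',Q,Q'$, so $u\in\mathbb{Q}(v)$, a quadratic field, and $u$ is irrational for the same reason; hence $u$ is a quadratic irrational.

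The hard direction is Lagrange's theorem, ``quadratic irrational $\Rightarrow$ eventually periodic'', and this is where the real work is. Let $u$ satisfy $Au^{2}+Bu+C=0$ with $A,B,C\in\mathbb{Z}$, $A\neq0$, let $(u_{n})$ be the complete quotients of its continued fraction, and $p_{n}/q_{n}$ the convergents. Substituting $u=(p_{n-1}u_{n}+p_{n-2})/(q_{n-1}u_{n}+q_{n-2})$ into the quadratic produces $A_{n}u_{n}^{2}+B_{n}u_{n}+C_{n}=0$ with integer coefficients, where $A_{n}=Ap_{n-1}^{2}+Bp_{n-1}q_{n-1}+Cq_{n-1}^{2}=q_{n-1}^{2}\,f(p_{n-1}/q_{n-1})$ for $f(x):=Ax^{2}+Bx+C$, and $C_{n}=A_{n-1}$. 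The crucial estimate uses $|u-p_{n-1}/q_{n-1}|<1/q_{n-1}^{2}$: writing $p_{n-1}/q_{n-1}=u+\theta_{n}/q_{n-1}^{2}$ with $|\theta_{n}|<1$ and Taylor-expanding $f$ about $u$, where $f(u)=0$, gives $|A_{n}|\le|f'(u)|+|A|$, a bound independent of $n$; then $|C_{n}|=|A_{n-1}|$ is bounded as well, and the invariance of the discriminant $B_{n}^{2}-4A_{n}C_{n}=B^{2}-4AC$ bounds $|B_{n}|$. Hence the triples $(A_{n},B_{n},C_{n})$ realize only finitely many values, so some complete quotient repeats, $u_{m}=u_{m+k}$, which forces the continued fraction to be periodic from index $m$ onward. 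The main obstacle is exactly making this coefficient bound uniform in $n$ --- everything else is bookkeeping with the convergent recursions --- so I would devote the bulk of the argument to that Taylor estimate and to verifying the identities $C_{n}=A_{n-1}$ and the discriminant invariance.
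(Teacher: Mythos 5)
Your proposal is correct in substance, but it both goes beyond and deviates from what the paper actually does. The paper only needs, and only establishes, the direction ``periodic $\Rightarrow$ quadratic irrational'': in Section \ref{general-higher-periods} it sets up the recursions $A_{k}=A_{k-1}a_{k}+A_{k-2}$, $B_{k}=B_{k-1}a_{k}+B_{k-2}$ and the tails $\xi_{k}$, uses $\xi_{h}=\xi_{h+p}$ to express $\xi_{0}$ in two ways, and eliminates $\xi_{h}$ to get an explicit integer quadratic $c_{3}\xi_{0}^{2}+c_{2}\xi_{0}+c_{1}=0$ with formulas for $c_{1},c_{2},c_{3}$ in terms of the $A$'s and $B$'s; these explicit coefficients are the whole point, since they feed the resonance computations later. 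Your Euler half instead reduces to the purely periodic tail $v$, derives a quadratic for $v$, and concludes via $u\in\mathbb{Q}(v)$; this proves existence of an integer quadratic for $u$ but does not produce the coefficients, which is fine for the theorem as stated though less useful for the paper's purposes. For the converse (Lagrange's theorem) the paper gives no proof at all and cites Perron \S~19 and Khintchine \S~10, so your bounded-coefficients argument is genuinely extra content; it is the standard proof and the key estimates ($|A_{n}|\le|f'(u)|+|A|$, $C_{n}=A_{n-1}$, invariance of the discriminant) are right. One small repair there: from ``only finitely many triples $(A_{n},B_{n},C_{n})$'' you cannot immediately conclude $u_{m}=u_{m+k}$, because two complete quotients satisfying the same quadratic could be its two distinct roots; take three indices carrying the same triple (pigeonhole), so that two of the corresponding complete quotients coincide, and note $A_{n}\neq0$ because $u_{n}$ is irrational.
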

Here we are only interested in the direction "$\Rightarrow$", which follows directly for the general formulas for continued fractions in section \ref{general-higher-periods}, see below. The other direction is a bit more elaborate (see e.g. \cite{per54} §19 or \cite{khin49} §10).\\

For the argument we will carry out later, it is of crucial importance that, up to a
common scaling factor $z\in\mathbb{Z}$, there is exactly one quadratic equation satisfied by a quadratic irrational $u$.


As this is very important when considering the resonances of the eigenvalues in Bianchi models, we include a proof of this fact here (and we assume that the $c_{i}$ do not have a common factor because we will be interested in the smallest possible coefficients, where this is clear, see section \ref{smallest-resonance}):

\begin{lem}
\label{uniqueness-lemma}
For a (fixed) quadratic irrational $u$, let $c_{i}\in\mathbb{Z},i=1...3$
be s.t. $c_{1}+c_{2}u+c_{3}u^{2}=0$ and $\gcd(c_{i})=1$, i.e. the $c_{i}$ do not have a common factor.
Now assume that  $d_{1}+d_{2}u+d_{3}u^{2}=0$ also holds with $d_{i}\in\mathbb{Z}$.
Then it follows that 
\[
\exists z:d_{i}=z*c_{i}\mbox{, for (\ensuremath{i=1...3)}}\mbox{ with \ensuremath{z\in\mathbb{Z}}}
\]
\end{lem}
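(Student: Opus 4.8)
The plan is to exploit the fact that ``$u$ irrational'' is precisely the statement that $1$ and $u$ are linearly independent over $\mathbb{Q}$, use this to show that the two coefficient triples $(c_1,c_2,c_3)$ and $(d_1,d_2,d_3)$ are proportional, and then invoke $\gcd(c_i)=1$ to promote rational proportionality to an integer multiple.

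First I would dispose of degeneracies: if $c_3=0$ then $c_1+c_2u=0$ with $u\notin\mathbb{Q}$ forces $c_1=c_2=0$, contradicting $\gcd(c_i)=1$, so in fact $c_3\neq 0$; the same reasoning shows that either $(d_1,d_2,d_3)$ is the zero vector (in which case $z=0$ works) or else $d_3\neq 0$. Next comes the key step: eliminate the $u^2$-term by forming $d_3(c_1+c_2u+c_3u^2)-c_3(d_1+d_2u+d_3u^2)=0$, which collapses to $(d_3c_1-c_3d_1)+(d_3c_2-c_3d_2)u=0$. Linear independence of $1$ and $u$ over $\mathbb{Q}$ kills both coefficients, giving $d_3c_1=c_3d_1$ and $d_3c_2=c_3d_2$; together with the trivial identity $d_3c_3=c_3d_3$ this says $d_3\,(c_1,c_2,c_3)=c_3\,(d_1,d_2,d_3)$, so the two triples are proportional with ratio $q:=d_3/c_3\in\mathbb{Q}$.

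It then remains to check that $q\in\mathbb{Z}$. Writing $q=p/r$ in lowest terms with $r\geq 1$ and $\gcd(p,r)=1$, the relations $r d_i=p c_i$ for $i=1,2,3$ give $r\mid c_i$ for every $i$ (since $\gcd(p,r)=1$), hence $r\mid\gcd(c_i)=1$, so $r=1$ and $z:=q=p\in\mathbb{Z}$ does the job. I do not anticipate a serious obstacle here: the only point requiring care is not to divide by $c_3$ or $d_3$ before verifying they are nonzero, and the sole substantive ingredient is the irrationality of $u$, which is exactly what forces \emph{both} coefficients of the eliminated linear relation to vanish rather than merely imposing a single linear constraint.
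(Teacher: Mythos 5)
Your proof is correct and follows essentially the same route as the paper's: cross-multiply the two quadratic relations to eliminate one power of $u$ (you kill the $u^2$ term using $c_3,d_3$, the paper kills the constant term using $c_1,d_1$), invoke irrationality of $u$ to force both remaining coefficients to vanish, and then use $\gcd(c_i)=1$ to upgrade the rational proportionality factor to an integer. If anything, your write-up is more careful than the paper's own at the degenerate cases ($c_3\neq 0$, the zero triple $d=0$) and in the lowest-terms argument that the ratio is an integer, which the paper only asserts in passing.
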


\begin{proof}
Multiplying the equation with coefficients $c_{i}$ with $d_{1}$ and
the other one with $c_{1}$ results in the following two equations:

\begin{eqnarray*}
d_{1}c_{1}+d_{1}c_{2}u+d_{1}c_{3}u^{2} & = & 0\\
c_{1}d_{1}+c_{1}d_{2}u+c_{1}d_{3}u^{2} & = & 0
\end{eqnarray*}

Subtracting the second from the first equation leads to

\begin{eqnarray}
u(d_{1}c_{2}-c_{1}d_{2}+(d_{1}c_{3}-c_{1}d_{3})u) & = & 0\label{eq:u-eq}
\end{eqnarray}

and, as $u\ne0$, we conclude that

\[
u=\frac{c_{1}d_{2}-d_{1}c_{2}}{d_{1}c_{3}-c_{1}d_{3}}
\]

if $d_{1}c_{3}-c_{1}d_{3}\ne0$, which leads to a contradiction because
$u\notin\mathbb{Q}$ was assumed.

If, on the other hand, $d_{1}c_{3}-c_{1}d_{3}=0,$ it follows from
(\ref{eq:u-eq}) that also $d_{1}c_{2}-c_{1}d_{2}=0$, which leads
to the conclusion that $\frac{d_{1}}{c_{1}}=\frac{d_{2}}{c_{2}}=\frac{d_{3}}{c_{3}}:=z$
with $\ensuremath{z\in\mathbb{Z}}$. Note that ${z\in\mathbb{Q}}$ would lead to a contradiction because we assumed that the $c_{i}$ do not have a common factor.
\end{proof}

\subsection{The Case of Bianchi IX}

In order to check the (SNC) for the linearized vectorfield at a point
on the Kasner circle, observe that $DX(p)$ is diagonal and that there
are three hyperbolic eigenvalues for all points of the Kasner circle except for the Taub points.


In terms of the Kasner parameter $u$, the following formulas hold for those three eigenvalues (see section \ref{bianchi-defs}):

\begin{equation}
\left(\lambda_{1},\lambda_{2},\lambda_{3}\right)=\left(\frac{-6u}{1+u+u^{2}},\frac{6(1+u)}{1+u+u^{2}},\frac{6u(1+u)}{1+u+u^{2}}\right)\label{eq:EVinBIXinU}
\end{equation}

All 3 hyperbolic eigenvalues are real. A resonance thus means in
this case: $\exists k=(k_{1},k_{2},k_{3}),k_{i}\in\mathbb{Z}$ s.t.

\begin{equation}
k_{1}\lambda_{1}+k_{2}\lambda_{2}+k_{3}\lambda_{3}=0\label{eq:resEV}
\end{equation}
where either all of the $k_{i}$ must have the same sign, or the one of the $k_{i}$ that has a different sign than the other two must be equal to $\pm 1$. Because this "sign condition"
will play an important role later on, let us make the following definition:

\begin{defn*}
A triple $k=(k_{1},k_{2},k_{3}),k_{i}\in\mathbb{Z}$ satisfies the Resonance Sign Condition (RSC)
$\iff$  either all of the $k_{i}$ must have the same sign, or the one of the $k_{i}$ that has a different sign than the other two must be equal to $\pm 1$
\end{defn*}

Only if a triple fulfils the RSC, it qualifies as a coefficient-triple for a resonance that prevents the application of Takens Linearization Theorem. This means that if we can show that resonant coefficients do not fulfill the RSC, they do not matter and Takens-Linearization is still possible. Note that a simple way of showing that the RSC is not satisfied is to show that one coefficient is strictly bigger than one, while a different one is strictly less than minus one.


\subsection{SNC for Infinite Periodic Heteroclinic Chains}

In preparation for further generalizations to Bianchi-models of class
B, this section is formulated a bit more general that it would be
necessary for discussing only the case of Bianchi IX. As seen above,
the eigenvalues of the linearized vectorfield in BIX for points of
the Kasner circle can be expressed in the Kasner parameter $u$:

\begin{equation}
\lambda_{i}=\frac{l_{1}^{i}+l_{2}^{i}u+l_{3}^{i}u^{2}}{1+u+u^{2}} \label{eq:formulaEV}
\end{equation}

Combining (\ref{eq:resEV}) and (\ref{eq:formulaEV}), one gets

\begin{equation}
k_{1}(l_{1}^{1}+l_{2}^{1}u+l_{3}^{1}u^{2})+k_{2}(l_{1}^{2}+l_{2}^{2}u+l_{3}^{2}u^{2})+k_{3}(l_{1}^{3}+l_{2}^{3}u+l_{3}^{3}u^{2})=0\label{eq:by k}
\end{equation}

or, equivalently,

\begin{equation}
(k_{1}l_{1}^{1}+k_{2}l_{1}^{2}+k_{3}l_{1}^{3})+(k_{1}l_{2}^{1}+k_{2}l_{2}^{2}+k_{3}l_{2}^{3})u+(k_{1}l_{3}^{1}+k_{2}l_{3}^{2}+k_{3}l_{3}^{3})u^{2}=0\label{eq:by u}
\end{equation}
As discussed above, for infinite periodic heteroclinic chains, there
are (up to a common scaling factor) unique coefficients $c_{i}\in\mathbb{Z}$
s.t. 

\begin{equation}
c_{1}+c_{2}u+c_{3}u^{2}=0
\end{equation}

Comparing (\ref{eq:by k}) to (\ref{eq:by u}), one sees that (SNC)
does not hold if $\exists k=(k_{1},k_{2},k_{3})$ as above and $z\in\mathbb{Z}$
s.t.

\begin{equation}
M*\left(\begin{array}{c}
k_{1}\\
k_{2}\\
k_{3}
\end{array}\right)=z*\left(\begin{array}{c}
c_{1}\\
c_{2}\\
c_{3}
\end{array}\right)\label{eq:resonance}
\end{equation}

with

\[
M=\left(\begin{array}{ccc}
l_{1}^{1} & l_{1}^{2} & l_{1}^{3}\\
l_{2}^{1} & l_{2}^{2} & l_{2}^{3}\\
l_{3}^{1} & l_{3}^{2} & l_{3}^{3}
\end{array}\right)
\]
where we will solve (\ref{eq:resonance}) for $(k_{1},k_{2},k_{3})$
in order to check the order of the first resonance.

\subsection{Conclusions for Bianchi IX}

It can be seen easily that the formulas (\ref{eq:EVinBIXinU}) imply
that for Bianchi IX we have

\[
M_{BIX}=\left(\begin{array}{ccc}
0 & 6 & 0\\
-6 & 6 & 6\\
\text{0} & 0 & 6
\end{array}\right)=6*\left(\begin{array}{ccc}
0 & 1 & 0\\
-1 & 1 & 1\\
\text{0} & 0 & 1
\end{array}\right)
\]
 and
\[
M_{BIX}^{-1}=\frac{1}{6}*\left(\begin{array}{ccc}
1 & -1 & 1\\
1 & 0 & 0\\
\text{0} & 0 & 1
\end{array}\right)
\]
Observe that we have a choice of the factor $z$ on the right hand
side of (\ref{eq:resonance}), and that a choice of $z=6$ will result
in an integer resonance with the smallest possible order: 

\begin{equation}
\left(\begin{array}{c}
k_{1}\\
k_{2}\\
k_{3}
\end{array}\right)=\frac{1}{6}*\left(\begin{array}{ccc}
1 & -1 & 1\\
1 & 0 & 0\\
\text{0} & 0 & 1
\end{array}\right)*6*\left(\begin{array}{c}
c_{1}\\
c_{2}\\
c_{3}
\end{array}\right)=\left(\begin{array}{c}
c_{1}-c_{2}+c_{3}\\
c_{1}\\
c_{3}
\end{array}\right)\label{eq:solved for k}
\end{equation}
If the entries of the vector on the right hand side of (\ref{eq:solved for k})
do not have a common factor, then the first resonance will occur at order
$l:=|k_{1}|+|k_{2}|+|k_{3}|=|c_{1}-c_{2}+c_{3}|+|c_{1}|+|c_{3}|$

\subsection{Uniqueness of the Resonance}
\label{smallest-resonance}

For the argument we will carry out later, it is of crucial importance
that we find the order $l$ of the \textbf{first} resonance, meaning
that we can exclude all resonances with order $\tilde{l}<l$. 


In order to do this, we will need the Lemma \ref{uniqueness-lemma} 
on the uniquness of the coefficients for the quadratic equation for quadratic irrationals.

We claim that if we choose the smallest possible coefficients $c_{i}$ for the equation in $u$ (meaning that the $c_{i}$ do not have a common factor), this will lead to the smallest resonance $l:=|k_{1}|+|k_{2}|+|k_{3}|$.

This is true because of the linear dependence of the $k_{i}$ on the $c_{i}$ in (\ref{eq:solved for k}), meaning
that we can exclude all resonances with order $\tilde{l}<l$.


\newpage

\section{Continued Fraction Expansion for Quadratic Irrationals} \label{Details-Section-Formulas}

We will use the following notation for continued fractions:

\[
u=a_{0}+\frac{1}{a_{1}+\frac{1}{a_{2}+\frac{1}{...}}}=:[a_{0},a_{1},a_{2},...]
\]
In this section, we will consider 3 classes of examples, namely $u\in\mathbb{R}$ with
constant, 2-periodic and 3-periodic continued fraction expansions, i.e either
$u=[a,a,...]$ or $u=[a,b,a,b,...]$ or $u=[a,b,c,a,b,c,...]$ for $a,b,c\in\mathbb{N}$. We also recall from section \ref{bianchi-defs} that the Kasner map has the following form:

\begin{eqnarray*}
u & = & \begin{cases}
u-1 & u\in[2,\infty]\\
\frac{1}{u-1} & u\in[1,2]
\end{cases}
\end{eqnarray*}

\subsection{Constant Continued fraction}
\label{1-periodicCFE}

Because of the form of the Kasner-map, starting with $u=[a,a,...]$
will result in the following base-points on the Kasner-circle:

\begin{eqnarray*}
u_{0} & = & [a,a,a,...]\\
u_{1} & = & [a-1,a,a,...]\\
u_{2} & = & [a-2,a,a,...]\\
 & ...\\
u_{a-1} & = & [1,a,a,...]\\
u_{a} & = & [a,a,a,...]\\
 & ...
\end{eqnarray*}
That's why we have to check the Non-Resonance-Conditions at all points
with $u=[m,a,a,...]$ for $m=1...a$. Now note that for $u=[m,a,a,...]$
it holds that

\[
\frac{1}{u-m}-a=u-m
\]

which means that

\[
(m^{2}-am-1)+(a-2m)u+u^{2}=0
\]

resulting in a coefficient vector

\[
\left(\begin{array}{c}
c_{1}\\
c_{2}\\
c_{3}
\end{array}\right)=\left(\begin{array}{c}
m^{2}-am-1\\
a-2m\\
1
\end{array}\right)
\]

Now we can use equation (\ref{eq:solved for k}) to compute the coefficients
for the resonance of the eigenvectors (we set $s=-1$ in order to
match the condition (\ref{eq:stern2})):

\[
\left(\begin{array}{c}
k_{1}\\
k_{2}\\
k_{3}
\end{array}\right)=-1*\left(\begin{array}{c}
c_{1}-c_{2}+c_{3}\\
c_{1}\\
c_{3}
\end{array}\right)=\left(\begin{array}{c}
-m^{2}+(a-2)m+a\\
-m^{2}+am+1\\
-1
\end{array}\right)
\]

\subsection{2-Periodic Continued Fraction Expansion} 

\label{2-periodicCFE}
          
For\\ $u=[a,b,a,b,...]$, we have to check the base-points with $u=[m,b,a,b,a,...]$
with $m=1...a$ and $u=[m,a,b,a,b,...]$ for $m=1...b$. Applying
the same procedure as above, we note that that $u$ satisfies

\[
\frac{1}{\frac{1}{u-m}-a}-b=u-m\mbox{ }\mbox{\,\&\,\ \ensuremath{\frac{1}{\frac{1}{u-m}-b}-a=u-m}}\mbox{ }
\]

when $u=[m,a,b,a,b,...]$ and $u=[m,b,a,b,a,...]$, respectively, and get the following coefficient vectors for $u$:

\[
\left(\begin{array}{c}
c_{1}\\
c_{2}\\
c_{3}
\end{array}\right)=\left(\begin{array}{c}
-am^{2}+abm+b\\
2am-ab\\
-a
\end{array}\right)\mbox{ \& \ensuremath{\left(\begin{array}{c}
c_{1}\\
c_{2}\\
c_{3}
\end{array}\right)=\left(\begin{array}{c}
-bm^{2}+abm+a\\
2bm-ab\\
-b
\end{array}\right)}}
\]

resulting in these coefficient vectors for the eigenvalues (we set
$s=1$ this time):

\[
\left(\begin{array}{c}
k_{1}\\
k_{2}\\
k_{3}
\end{array}\right)=\left(\begin{array}{c}
-am^{2}+(ab-2a)m+ab-a+b\\
-am^{2}+abm+b\\
-a
\end{array}\right)
\]

and 

\[
\left(\begin{array}{c}
k_{1}\\
k_{2}\\
k_{3}
\end{array}\right)=\left(\begin{array}{c}
-bm^{2}+(ab-2b)m+ab+a-b\\
-bm^{2}+abm+a\\
-b
\end{array}\right)
\]

\subsection{3-Periodic Continued Fraction Expansion}

\label{3-periodicCFE}

In complete analogy to the computations above, we find the following
formulas, for the 3 relevant cases. Note that we show the coefficient
vectors for $u$ below, and in all three cases we have to compute
the coefficient vectors for the eigenvalues as done before:

\[
\left(\begin{array}{c}
k_{1}\\
k_{2}\\
k_{3}
\end{array}\right)=\left(\begin{array}{c}
c_{1}-c_{2}+c_{3}\\
c_{1}\\
c_{3}
\end{array}\right)
\]

\paragraph*{u={[}m,b,c,a,...{]} for m=1...a}

\[
\left(\begin{array}{c}
c_{1}\\
c_{2}\\
c_{3}
\end{array}\right)=\left(\begin{array}{c}
m^{2}+mc+m^{2}bc-bm-am-ac-abcm-1\\
abc+a+b-c-2m-2mbc\\
1+bc
\end{array}\right)
\]

\paragraph*{u={[}m,c,a,b,...{]} for m=1...b}

\[
\left(\begin{array}{c}
c_{1}\\
c_{2}\\
c_{3}
\end{array}\right)=\left(\begin{array}{c}
m^{2}+ma+m^{2}ca-cm-bm-ba-abcm-1\\
abc+b+c-a-2m-2mca\\
1+ca
\end{array}\right)
\]

\paragraph*{u={[}m,a,b,c,...{]} for m=1...c}

\[
\left(\begin{array}{c}
c_{1}\\
c_{2}\\
c_{3}
\end{array}\right)=\left(\begin{array}{c}
m^{2}+mb+m^{2}ab-am-cm-cb-abcm-1\\
abc+c+a-b-2m-2mab\\
1+ab
\end{array}\right)
\]

\newpage

\section{Results on Admissibility of Periodic Heteroclinic Chains in Bianchi IX} \label{sub:AnalyticResultsBIX}

In this section, we will concretely check the Sternberg Resonance Conditions for periodic heteroclinic chains in BIX. 

We will prove some general theorems, while more concrete examples can be found in the Appendix \ref{appendix-BIX-NRC}.

\subsection{Constant Continued Fraction Development}

We will give a proof of the fact that there
are no infinite periodic heteroclinic chains with constant continued
fraction development that allow Takens - Linearization at their base
points. More geometrically, this excludes ``symmetric'' heteroclinic
chains with the same number of ``bounces'' near all of the 3 Taub
Points - the result shows that we have to require some ``asymmetry''
in the bounces in order to allow for Takens-Linearization. Below, there is an illustration of 
the heteroclinic chain belonging to $u=[3,3,...]$ which does not allow for Takens-Linearization:

\bigskip
\begin{minipage}{0.7\textwidth}
\centering
\setlength{\unitlength}{0.277\textwidth}
\begin{picture}(3.6,3.6)(-1.3,-1.8)
\put(-1.3 , 1.8 ){\makebox(0,0)[tl]{%
  \includegraphics[width=3.6\unitlength]{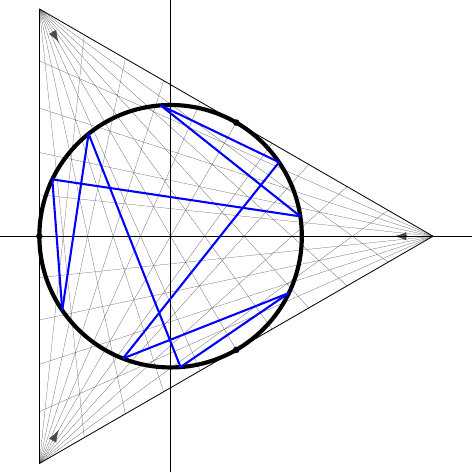}}}
\put( 2.3 ,-0.03){\makebox(0,0)[rt]{$\Sigma_+$}}
\put( 0.03, 1.8){\makebox(0,0)[lt]{$\Sigma_-$}}

\put(-1.03,-0.03){\makebox(0,0)[rt]{$T_1$}}
\put( 0.53, 0.88){\makebox(0,0)[lb]{$T_2$}}
\put( 0.53,-0.88){\makebox(0,0)[lt]{$T_3$}}

\end{picture}
\end{minipage}
\bigskip

\begin{thm}
\label{thm-constant}
For any heteroclinic chain with constant continued
fraction development,  Takens-Linearization fails at some base point.
\end{thm}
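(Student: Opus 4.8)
The plan is to reduce the theorem to a single explicit sign check. For a chain with constant continued fraction development $u=[a,a,a,\dots]$ there are only the finitely many base points with $u=[m,a,a,\dots]$, $m=1,\dots,a$ (Section \ref{1-periodicCFE}), and none of them is a Taub point: each such $u$ is a quadratic irrational, so in particular $u\notin\{0,1,\infty\}$, and hence $DX(p)$ has three real hyperbolic eigenvalues there, given by (\ref{eq:EVinBIXinU}), to which the reduction of Section \ref{resonances-bix} applies. So it is enough to produce \emph{one} base point at which some resonant triple $(k_1,k_2,k_3)$ satisfies the RSC; by the remark following the definition of the RSC in Section \ref{resonances-bix}, such a triple violates the (SNC), so Takens-Linearization fails at that base point.

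First I would specialize the formulas of Section \ref{1-periodicCFE} to $m=a$, i.e. to the base point $u_0=[a,a,a,\dots]$ of the chain itself. There the quadratic-coefficient vector $(m^2-am-1,\,a-2m,\,1)$ becomes $(c_1,c_2,c_3)=(-1,\,-a,\,1)$ — consistent with $u_0$ being the root of $u^2-au-1=0$ — and substituting into (\ref{eq:solved for k}) with the sign $s=-1$ fixed in Section \ref{1-periodicCFE} yields
\[
(k_1,k_2,k_3)=\bigl(-(c_1-c_2+c_3),\,-c_1,\,-c_3\bigr)=(-a,\,1,\,-1).
\]
That this is a genuine resonance is automatic from the way (\ref{eq:solved for k}) was derived, and one double-checks it at once against (\ref{eq:EVinBIXinU}): $-a\lambda_1+\lambda_2-\lambda_3=0$. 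Since $\gcd(a,1,1)=1$ the triple is primitive, so by Section \ref{resonances-bix} it is in fact the \textbf{first} resonance at $u_0$, of order $a+2$; but for the theorem I only need the triple itself.

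Next I would verify the RSC for $(-a,1,-1)$: the three entries do not all have the same sign, but the unique entry of minority sign is $k_2=1$ with $|k_2|=1$, so the RSC holds. Hence the (SNC) is violated at $u_0$ and Takens-Linearization fails at that base point, which proves the theorem. The computation is uniform in $a\in\mathbb{N}$; in the extreme case $a=1$ the single base point $m=1=a$ already yields the RSC-triple $(-1,1,-1)$.

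The only place where attention is required is conceptual rather than computational: one must notice that evaluating the $m$-dependent formulas of Section \ref{1-periodicCFE} at the chain's own base point $m=a$ collapses everything to the uniform, manifestly RSC-satisfying triple $(-a,1,-1)$. Once that is seen there is no analytic obstacle, since the uniqueness of the quadratic coefficients (Lemma \ref{uniqueness-lemma}) and the explicit inversion (\ref{eq:solved for k}) are already in hand. I do not expect — and would not try — to prove the stronger statement that Takens fails at \emph{every} base point: the $2$-periodic examples treated later show this is false in general, which is exactly why the theorem is stated in the ``some base point'' form, for which the one evaluation above suffices.
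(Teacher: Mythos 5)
Your computation at the base point $m=a$ is correct: the quadratic coefficients are $(c_1,c_2,c_3)=(-1,-a,1)$, the triple $k=(-a,1,-1)$ follows from (\ref{eq:solved for k}) with $s=-1$, it matches the Appendix values, and it satisfies the RSC (the paper works with $m=a-1$ and the equivalent triple $(1,a,-1)$ of the same order, so your choice of base point is immaterial). The genuine gap is in your last inference: from ``the resonant triple satisfies the RSC'' you conclude directly that the (SNC) is violated and Takens-Linearization fails. The RSC is only a \emph{necessary} qualification for a resonance to be an obstruction (the paper's remark says ``only if''), not a sufficient one. The Sternberg conditions underlying the Takens theorem demand non-resonance only up to a finite order $\alpha(1)$ determined by the eigenvalue ratios at the base point; a resonance whose order $|k_1|+|k_2|+|k_3|$ exceeds that threshold does not obstruct a $C^1$ linearization at all. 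This is precisely why the paper's proof, after the sign lemma, computes $|k|=a+2$ (linear in $a$), bounds $\alpha(1)$ from below by an expression quadratic in $u$ using (\ref{eq:EVinBIXinU}), and concludes $|k|<\alpha(1)$ for all $a$, so the non-resonance condition of the \emph{required} order fails. The Appendix makes the same point in reverse for $u=[3,3,2,\ldots]$: a resonance is present there, yet the chain would still qualify for Takens linearization because the resonance order exceeds the required $\alpha$ at all base points.

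To repair your argument, keep $m=a$ and add the quantitative comparison: with the eigenvalues ordered by magnitude and the quantities $\beta$ and $\alpha$ of the Sternberg/Takens conditions, one gets $\alpha(1)$ growing quadratically in $u\approx a$ while $|k|=a+2$ grows only linearly, hence $|k|<\alpha(1)$ for every $a\ge 1$ (this is also what the Mathematica tables in the Appendix confirm). Only with this estimate does your exhibited RSC-satisfying resonance actually violate the (SNC) at the order relevant for $C^1$-linearization, which is what the theorem asserts.
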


\begin{proof}

As we have seen above, a periodic heteroclinic chain has a periodic continued fraction development, leading to a resonance, and let us 
call the coefficients for that resonance $k=(k_{1}, k_{2}, k_{3})$. The first thing we have to check is if k satisfies the Resonance Sign Condition (RSC) defined above.

\begin{lem*}
For constant continued fraction development,  $(u=[a,a,...])$, the coefficient vector $k=(k_{1}, k_{2}, k_{3})$ satisfies the Resonance Sign Condition (RSC) at all base points.
\end{lem*}

\begin{proof}

To prove the Lemma, we observe the following when looking at the formulas for constant continued fraction development in section \ref{1-periodicCFE}:
\begin{itemize}
\item for $m=a$, it holds that $k=(1,a,-1)$
\item for $m=a-1$,  $k=(-a,1,-1)$
\item for $1\ge m < a-1$ and $k=(k_{1}, k_{2}, k_{3})$, it holds that $k_{1}, k_{2} > 0$, while $k_{3}=-1$
\end{itemize}
Thus, the RSC are satisfied in all cases, and the coefficient vector would qualify.

\end{proof}

To prove Theorem \ref{thm-constant}, we have to compare two things:

\begin{itemize}
\item the order of the resonance of the eigenvalues at the basepoints, expressed first in the Kasner-parameter $(u=[a,a,...])$ and then directly in $a$
\item the required SNC for $C^1$-stable-manifolds, i.e. $\alpha(1)$ at all base points
\end{itemize}

The base points of a infinite periodic heteroclinic chain with $u=[a,a,...]$ are
$u=[m,a,...]$ for $m=1...a$. To prove the Theorem, it is enough to show the violation of
the Sternberg Non-Resonance Conditions at one base point. Consider the case $m=a-1$ and start with the formulas for the coefficient vectors, as computed above:

\[
\left(\begin{array}{c}
k_{1}\\
k_{2}\\
k_{3}
\end{array}\right)=
\left(\begin{array}{c}
-m^{2}+(a-2)m+a\\
-m^{2}+am+1\\
-1
\end{array}\right)=
\left(\begin{array}{c}
1\\
a\\
-1
\end{array}\right)
\]

Therefor, it holds that $|k|=a+2$, i.e. we have linear growth of $|k|$ in a.

On the other hand, re-consider the formulas for the eigenvalues in BIX: 

\begin{equation}
\left(\lambda_{1},\lambda_{2},\lambda_{3}\right)=\left(\frac{-6u}{1+u+u^{2}},\frac{6(1+u)}{1+u+u^{2}},\frac{6u(1+u)}{1+u+u^{2}}\right)\label{eq:EVinBIXinU}
\end{equation}

and order them according to magnitude (with the notation from the SNC's from the Takens-Theorem):

\[
\left(\begin{array}{c}
N\\
n\\
m=M
\end{array}\right)=
\left(\begin{array}{c}
|\lambda_{1}|\\
|\lambda_{2}|\\
|\lambda_{3}|
\end{array}\right)
\]

Insert in the formulas for $\alpha, \beta$ and compute:

\[
\beta=Ceiling[\frac{N+k(M+n)}{n}] \ge \frac{u^2+3u+1}{u+1}
\]

\[
\alpha=Ceiling[\frac{M+\beta(N+m)}{m}]\ge \frac{u^3+5u^2+8u+3}{u+1}
\]

This shows quadratic growth for $\alpha$ in u. In fact, for $u=[a-1,a,...]=a-1+\frac{1}{a+\frac{1}{a+\frac{1}{...}}}$, it holds $\forall a>0: |k|<\alpha(1)$, i.e. the SNCs are violated and Takens-Linearization is not possible, which proves the Theorem. For consistency, also compare to Appendix \ref{appendix-BIX-NRC}, where we used Mathematica to compute $\alpha(1)$ and $|k|$ for $u=[m,a,...]$ for $m=1...a$ and $a=1...9$.


\end{proof}

\subsection{2-Periodic Continued Fraction Development}

In this section, we will prove the following Theorem:

\begin{thm}
\label{thm-2per}
For admissible heteroclinic chains with 2-periodic continued fraction development, Takens Linearization is possible at all base points. 
\end{thm}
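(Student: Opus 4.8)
The plan is to run the argument of Theorem~\ref{thm-constant} in reverse. There, one base point was exhibited at which the resonance both satisfies the RSC and has order $<\alpha(1)$; here I will instead show that at \emph{every} base point the resonance (unique up to scaling, by Lemma~\ref{uniqueness-lemma}) \emph{violates} the RSC, so that by the remarks following the definition of the RSC no resonance can obstruct the Sternberg conditions and Takens-Linearization holds at all base points. For a $2$-periodic chain $u=[a,b,a,b,\ldots]$ the base points form the family $u=[m,a,b,a,b,\ldots]$ with $1\le m\le b$ and the family $u=[m,b,a,b,a,\ldots]$ with $1\le m\le a$; these are interchanged by $a\leftrightarrow b$, so it suffices to treat the first. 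I will use the admissibility hypothesis only through the consequence that neither of the partial quotients $a,b$ divides the other --- equivalently, with $g:=\gcd(a,b)$, that $a/g\ge 2$ and $b/g\ge 2$. This restriction is essential: for $u=[2,4,2,4,\ldots]$ the base point $[1,2,4,2,4,\ldots]$ carries the resonance $(-6,-5,1)$, which satisfies the RSC and has order $12<15=\alpha(1)$, so Takens-Linearization fails there.

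By section~\ref{2-periodicCFE}, at $u=[m,a,b,a,b,\ldots]$ the coefficient vector of the quadratic satisfied by $u$ is $c=(-am^{2}+abm+b,\ a(2m-b),\ -a)$ and the resonance vector is, up to a sign irrelevant for the RSC, $k=(c_1-c_2+c_3,\,c_1,\,c_3)$; in particular $k_2=c_1=am(b-m)+b$ and $k_3=c_3=-a$. A common divisor of $k_1,k_2,k_3$ divides $c_2=c_1+c_3-k_1$ as well, hence divides $\gcd(c_1,c_2,c_3)=\gcd(a,b)=g$; conversely $g$ divides every $k_i$. So the primitive resonance is $\hat k:=k/g$, and by Lemma~\ref{uniqueness-lemma} every resonance at this base point is a nonzero integer multiple of $\hat k$.

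The crucial point is then a sign estimate on the coordinates $k_2$ and $k_3$, valid uniformly for all $m\in\{1,\ldots,b\}$: since $m(b-m)\ge 0$ on this range, $k_2=am(b-m)+b\ge b$, whence $\hat k_2=k_2/g\ge b/g\ge 2$; and $\hat k_3=k_3/g=-a/g\le -2$. Thus $\hat k$ has one coordinate strictly larger than $1$ and another strictly smaller than $-1$, so by the simple criterion noted after the definition of the RSC it violates the RSC. The same holds for every nonzero integer multiple $t\hat k$: the coordinates $t\hat k_2$ and $t\hat k_3$ then have opposite signs and absolute values $\ge 2$, so one is $>1$ and the other $<-1$. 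Hence no resonance at this base point obstructs Takens-Linearization, and, interchanging $a$ and $b$, the same holds at every base point of the second family. This proves the theorem.

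I expect the real content to lie in the bookkeeping with the common factor $g=\gcd(a,b)$: passing to the primitive resonance $\hat k$ divides both $k_2$ and $k_3=-a$ by $g$, and it is precisely the admissibility condition $a/g\ge 2$, $b/g\ge 2$ that keeps $\hat k_2$ and $|\hat k_3|$ from collapsing to $1$ and reinstating the RSC. Two further points deserve care. One should run the estimate through $k_2=am(b-m)+b$ and $k_3=-a$, whose signs are constant in $m$ on $\{1,\ldots,b\}$, rather than through $k_1$, which changes sign as $m$ runs over that range. And it is instructive to contrast the constant case --- where the quadratic in $u$ is monic, forcing $c_3=1$, an already primitive coefficient vector, and hence $|k_3|=1$ and the RSC --- with the $2$-periodic case, where the quadratic has leading coefficient $-a$, so that after reduction $|\hat k_3|=a/\gcd(a,b)\ge 2$: this is the precise ``asymmetry'' the abstract refers to.
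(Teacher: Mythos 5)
Your proof is correct and follows essentially the same route as the paper's own argument (Lemma \ref{lem-2per}): the RSC is violated at every base point because the formulas of section \ref{2-periodicCFE} give $k_2\ge b>1$ and $k_3=-a<-1$, and symmetrically with $a$ and $b$ exchanged. Your explicit reduction to the primitive resonance $\hat k=k/\gcd(a,b)$, with admissibility used in the form $a/\gcd(a,b)\ge 2$ and $b/\gcd(a,b)\ge 2$, and the verification for all integer multiples of $\hat k$ via Lemma \ref{uniqueness-lemma}, is a more careful treatment of the common-factor issue that the paper handles only by a passing remark, but it is a refinement of the same argument rather than a genuinely different one.
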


Here, admissible means that the continued fraction developments has minimal period 2 and the entries are strictly bigger than one (even after cancelling out a possible common factor). To be precise, we define an admissible 2-periodic continued fraction development as follows:

\begin{defn}
\label{adm-2per}
A 2-periodic  $u=[a,b,a,b,...]$ is called admissible $\iff$  $a,b>1$ and neither $a \mid b$ nor $b \mid a$.
\end{defn}

Note that from the condition above, it follows in particular that $a \ne b$, beeing consistent with the results in the section above about constant contiuned fractions.
Two examples of such a heteroclinic chains are illustrated below, with u=[3,2,3,2,...] and with u=[2,3,2,3,...], which are 10-cycles (also compare Appendix \ref{appendix-2periodic}):

\bigskip
\begin{minipage}[t]{0.5\textwidth}
\includegraphics[width=60mm]{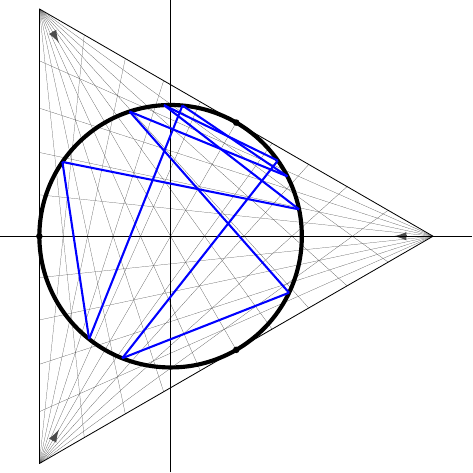}




\end{minipage}
\begin{minipage}[t]{0.5\textwidth}
\includegraphics[width=60mm]{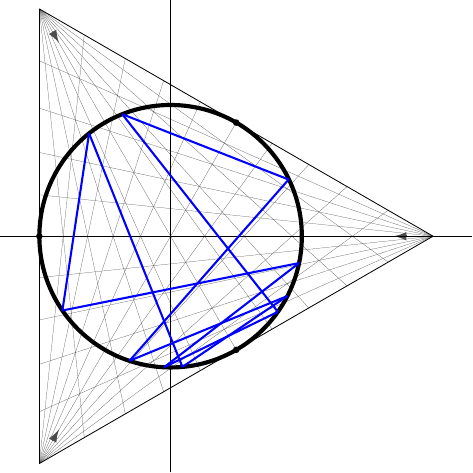}
\end{minipage}
\bigskip

\begin{proof}

The Theorem will directly follow from the following Lemma:

\begin{lem}
\label{lem-2per}
For admissible 2-periodic continued fraction developments, the coefficient vector $k=(k_{1}, k_{2}, k_{3})$ violates the Resonance Sign Condition (RSC) at all base points
\end{lem}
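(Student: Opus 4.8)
The plan is to invoke the simple sufficient condition for violating the (RSC) noted right after its definition: a triple $k=(k_{1},k_{2},k_{3})$ fails the (RSC) as soon as one of its entries is strictly larger than $1$ while a \emph{different} entry is strictly smaller than $-1$. Since the admissibility conditions are symmetric under $a\leftrightarrow b$, and the two families of base points of $u=[a,b,a,b,\dots]$ --- namely $u=[m,a,b,a,b,\dots]$ for $m=1,\dots,b$ and $u=[m,b,a,b,a,\dots]$ for $m=1,\dots,a$ --- are interchanged by swapping $a$ and $b$ in the formulas of section \ref{2-periodicCFE}, it suffices to run the argument for the first family, for which
\[
\left(\begin{array}{c}c_{1}\\c_{2}\\c_{3}\end{array}\right)=\left(\begin{array}{c}-am^{2}+abm+b\\2am-ab\\-a\end{array}\right)=\left(\begin{array}{c}a\,m(b-m)+b\\a(2m-b)\\-a\end{array}\right),\qquad m=1,\dots,b ,
\]
and then to quote the symmetry for the other family.

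First I would pass from these coefficients to the \emph{reduced} ones: the smallest-order resonance --- the one that actually matters, and of which every resonance triple is a nonzero integer multiple, by the uniqueness underlying (\ref{eq:resonance}) (cf.\ Lemma \ref{uniqueness-lemma}) --- corresponds to the coefficient vector with $\gcd(c_{i})=1$, and by linearity of (\ref{eq:solved for k}) in the $c_{i}$ reducing $c$ reduces the associated $k$-vector by the same factor. The key computation is that the common factor of the displayed vector is exactly $d:=\gcd(a,b)$: from $c_{3}=-a$ and $c_{2}=a(2m-b)$ we get $\gcd(c_{2},c_{3})=a$, and since $c_{1}\equiv b\pmod{a}$ this gives $\gcd(c_{1},c_{2},c_{3})=\gcd(b,a)$. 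This is exactly where admissibility enters: $a\nmid b$ forces $d<a$ and hence $a/d\ge 2$, and symmetrically $b\nmid a$ forces $b/d\ge 2$.

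The rest is two short estimates on the reduced resonance vector $k^{*}=(k_{1}^{*},k_{2}^{*},k_{3}^{*})$. Its last entry is $k_{3}^{*}=c_{3}/d=-a/d\le -2<-1$. For its middle entry, $k_{2}^{*}=c_{1}/d=(a/d)\,m(b-m)+b/d$; for $1\le m\le b-1$ one has $m(b-m)\ge 1$, so $k_{2}^{*}\ge a/d+b/d\ge 3$, while for $m=b$ one gets simply $k_{2}^{*}=b/d\ge 2$. In every case $k_{2}^{*}\ge 2>1$ and $k_{3}^{*}\le -2<-1$, so by the criterion recalled at the outset $k^{*}$ --- and hence every nonzero integer multiple of it, i.e.\ every resonance triple at this base point --- violates the (RSC). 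Running the identical argument with $a$ and $b$ interchanged handles the base points $u=[m,b,a,b,a,\dots]$, $m=1,\dots,a$, and finishes the Lemma.

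I expect the bookkeeping of the common factor $d=\gcd(a,b)$ to be the only genuine obstacle; the two inequalities themselves are immediate, but they must be applied to the \emph{reduced} vector rather than to the raw formulas of section \ref{2-periodicCFE}, and this is precisely what Definition \ref{adm-2per} is designed to make safe. Indeed, if one dropped ``$b\nmid a$'', then at the base point $m=b$ one would have $b/d=1$, so $k^{*}$ could take the shape $(\,\cdot\,,\,1,\,-a/d\,)$, which satisfies the (RSC) whenever its first and last entries have opposite signs --- exactly a resonance that would obstruct Takens-Linearization.
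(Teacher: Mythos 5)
Your proof is correct, and its core is the same sign estimate the paper uses: from the formulas of section \ref{2-periodicCFE} one reads off that $k_{2}$ is strictly positive and large while $k_{3}$ is strictly negative and below $-1$, which by the remark following the definition of the (RSC) rules out the resonance. The genuine difference is that you carry out the reduction to the primitive coefficient vector explicitly: you compute $\gcd(c_{1},c_{2},c_{3})=\gcd(a,b)=:d$, observe that (\ref{eq:solved for k}) is linear so the $k$-vector reduces by the same factor, and then verify $k_{3}^{*}=-a/d\le-2$ and $k_{2}^{*}\ge b/d\ge 2$ using exactly the admissibility conditions $a\nmid b$, $b\nmid a$ of Definition \ref{adm-2per}. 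The paper instead checks the sign condition on the \emph{unreduced} vector (where $k_{3}=-a$, $k_{2}\ge b$) and handles the divisibility issue only by a closing remark that $a\mid b$ or $b\mid a$ would produce a common factor and hence an earlier resonance. Your route is the more complete one: when $\gcd(a,b)>1$ but neither entry divides the other (e.g.\ $a=4$, $b=6$), the first resonance really is the reduced vector, and it is your computation $a/d,\,b/d\ge 2$ — not the raw inequalities — that shows the (RSC) still fails there; it also explains precisely why the case $m=b$ (respectively $m=a$) is the critical one and why Definition \ref{adm-2per} is exactly the right hypothesis. So your argument buys a rigorous treatment of the common-factor bookkeeping that the paper only sketches, at the cost of the extra $\gcd$ computation and the appeal to Lemma \ref{uniqueness-lemma} to know that all resonance triples are integer multiples of the primitive one; the symmetry reduction under $a\leftrightarrow b$ is a harmless streamlining.
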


\begin{proof}
When we look at the formulas for 2-periodic continued fraction development in section \ref{2-periodicCFE}, we can observe the following:
\begin{itemize}
\item for $u=[m,a,b,a,b,...]$ and $m=1...b$, it holds that $k_3=-a<-1$ and $k_2\ge b > 1$ as $bm \ge m^2$
\item for $u=[m,b,a,b,a,...]$ and $m=1...a$, it holds that $k_3=-b<-1$ and $k_2\ge a > 1$ as $am \ge m^2$
\end{itemize}
This means that the RSC are violated at all base points of the heteroclinic chain, and the lemma is proven.  Note that we need $a,b>1$, and that if we had $a \mid b$ or $b \mid a$, then coefficients $k_{1}, k_{2}, k_{3}$ would have a common factor we could cancel, leading to an earlier resonance. That's why we need to restrict to admissible 2-periodic continued fraction developments as defined above.

\end{proof}

The Lemma shows that, for "sign reasons", the occuring resonaces are excluded and do not matter for the application of the Takens Theorem. Therefor Takens Linerarization is possible, as claimed in Theorem \ref{thm-2per}.

\end{proof}

\newpage
\subsection{Continued Fraction Development with Higher Periods}
\label{general-higher-periods}

The idea behind the proof of Lemma \ref{lem-2per} can be generalized to continued fraction developments with higher periods. However, it is not so easy anymore to find conditions that assure in general that the resulting coefficients do not have a common factor. We will comment on this matter further at the end of the section.

At first, consider 3-periodic continued fractions. Three examples of such a heteroclinic chains are illustrated below, with u=[1,1,2,1,1,2,...],\\ u=[1,2,1,1,2,1,...] and u=[2,1,1,2,1,1,...] which are 8-cycles and arguably the simplest examples of periodic heteroclinic chains where our method works (this can be checked directly for the concrete examples above, see Appendix \ref{appendix-3periodic}). They all start in sector 5, and the different position of the number "2" in the contiued fraction development leads to bounces around the different Taub points which can be seen in the pictures below:

\bigskip
\begin{minipage}[t]{0.3\textwidth}
\includegraphics[width=45mm]{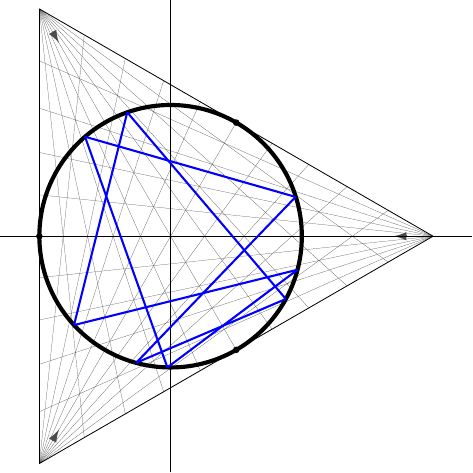}
\end{minipage}
\begin{minipage}[t]{0.3\textwidth}
\includegraphics[width=45mm]{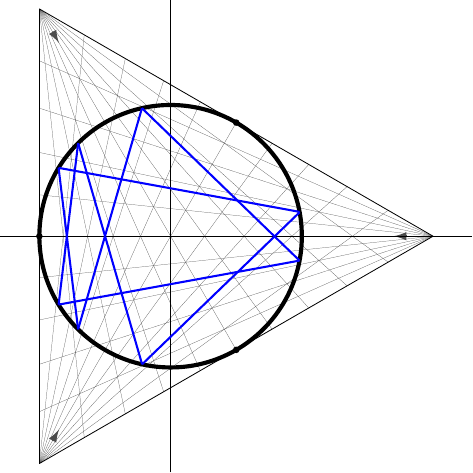}
\end{minipage}
\begin{minipage}[t]{0.3\textwidth}
\includegraphics[width=45mm]{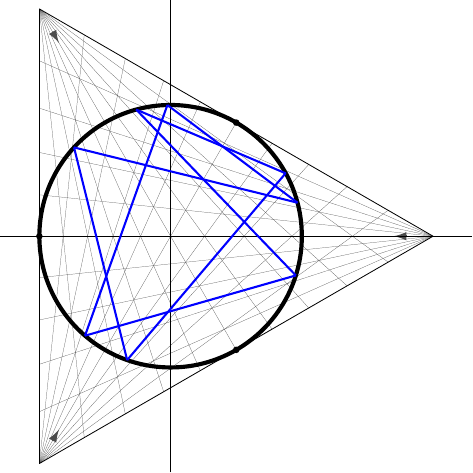}
\end{minipage}
\bigskip

\begin{lem}
\label{thm-3per} 

Consider a continued fraction development with minimal period 3, i.e. with $u=[a,b,c,a,b,c,...]$ and not $a=b=c$. Then the corresponding coefficient vector $k=(k_{1}, k_{2}, k_{3})$ violates the Resonance Sign Condition (RSC) at all base points if the $k_i$ do not have a common factor .
\end{lem}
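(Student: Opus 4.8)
The plan is to mimic the structure of the proof of Lemma \ref{lem-2per}, exploiting the explicit coefficient vectors $(c_1,c_2,c_3)$ and hence $(k_1,k_2,k_3)=(c_1-c_2+c_3,\,c_1,\,c_3)$ listed in section \ref{3-periodicCFE} for each of the three families of base points. The key observation in the $2$-periodic case was that two of the three coefficients are ``visibly'' out of range: $k_3$ is $-a$ or $-b$, hence strictly less than $-1$ once $a,b>1$, and $k_2$ is a quadratic expression in $m$ that stays positive on the relevant range $1\le m\le a$ (resp. $1\le m\le b$) because of the elementary inequality $am\ge m^2$. So the heart of the argument for period $3$ is to show that, at \emph{every} base point, the coefficient vector simultaneously has one entry $>1$ and one entry $<-1$; this is exactly the ``simple way of showing the RSC fails'' pointed out after the definition of the RSC.

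First I would treat $k_3$. In the three families the third coefficient is $c_3=1+bc$, $1+ca$, $1+ab$ respectively; each of these is $\ge 2$ automatically, and indeed $\ge 1+1\cdot 2=3$ unless two of $a,b,c$ equal $1$. Wait — $k_3$ here is positive, not negative, so the ``negative-and-large'' entry must be sought among $k_1$ and $k_2$. So the real work is: show that on the full range $1\le m\le a$ (resp. $b$, resp. $c$) at least one of $k_1=c_1-c_2+c_3$ and $k_2=c_1$ is $\le -2$, while $k_3\ge 2$. Using the formulas of section \ref{3-periodicCFE}, $k_2=c_1=m^2(1+bc) - m(b+a+abc) + (c - ac - 1)$ in the first family (and cyclic analogues). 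I would analyse this quadratic in $m$: its leading coefficient $1+bc$ is positive, so it is $\le -2$ on an interval of $m$-values around its vertex, and I would check that the relevant endpoints $m=1$ and $m=a$ land inside, or pair up $k_1$ versus $k_2$ so that whichever of the two is needed is negative. Concretely, at $m=a$ (the ``return'' point of the chain) one should get a clean factorization, analogous to $k=(1,a,-1)$ in the constant case, and at the other extreme $m=1$ a different clean value; for intermediate $m$ a monotonicity/convexity argument on the quadratic fills the gap. The hypothesis ``not $a=b=c$'' and ``$k_i$ have no common factor'' is what rules out the degenerate collapse to the constant case and guarantees we are genuinely looking at the first resonance.

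The main obstacle I expect is precisely the case analysis over the base-point index $m$: unlike the $2$-periodic case, where $k_3$ was manifestly $<-1$ and only $k_2>1$ needed the one-line inequality $am\ge m^2$, here the ``large negative'' coefficient migrates between $k_1$ and $k_2$ as $m$ ranges over $1,\dots,a$, and the quadratic expressions are genuinely cubic-looking in the parameters $a,b,c$, so one must be careful that no small values (some of $a,b,c$ equal to $1$, or $m$ near the vertex of the quadratic) produce a coefficient landing in $\{-1,0,1\}$. I would organize this by: (i) recording $k_3\ge 2$ always; (ii) showing $k_2=c_1\le -1$ on the relevant range by bounding the quadratic, and upgrading $\le -1$ to $\le -2$ using the no-common-factor hypothesis together with $k_3\ge 2$ (if $k_2=-1$ while $k_3\ge 2$ and $k_1$ arbitrary, RSC could still be satisfiable, so one genuinely needs $k_2\le -2$, which is where one may have to split off a few explicit small-parameter subcases and defer them to the Appendix computations referenced in the text); (iii) concluding that having $k_3\ge 2$ and $k_2\le -2$ violates the RSC — since then two coefficients are out of $\{-1,0,1\}$ with opposite signs, neither of the two RSC alternatives (all same sign / the lone off-sign coefficient equal to $\pm1$) can hold. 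By symmetry the other two families follow by the cyclic substitution $(a,b,c)\mapsto(b,c,a)\mapsto(c,a,b)$ already built into the formulas of section \ref{3-periodicCFE}, so only one family needs to be worked through in detail.
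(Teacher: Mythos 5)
Your overall strategy is the same as the paper's: at every base point exhibit $k_3=c_3\in\{1+bc,\,1+ca,\,1+ab\}\ge 2$ and $k_2=c_1\le -2$, then invoke the remark after the RSC definition that one coefficient strictly greater than one together with another strictly less than minus one rules out both RSC alternatives (your step (iii)). The gap is that the decisive inequality $k_2\le -2$ is never actually established: you only sketch a vertex/endpoint analysis of the quadratic in $m$, explicitly leave open whether intermediate $m$ or small parameters could push $k_2$ into $\{-1,0,1\}$ (so that the ``large negative'' entry would have to migrate to $k_1$), defer hypothetical subcases to the Appendix, and propose to ``upgrade $\le -1$ to $\le -2$'' using the no-common-factor hypothesis --- which is not a valid step, since that hypothesis only guarantees that the displayed triple is the relevant first resonance and cannot improve a numerical bound on $c_1$. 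Since the lemma asserts RSC violation at \emph{all} base points, leaving these possibilities open is a genuine gap. (There is also a small algebra slip: in the first family $c_1=m^2(1+bc)-m(a+b+abc-c)-(ac+1)$; you moved the $mc$ term into the constant part.)

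In fact no case analysis is needed, and this is how the paper closes the argument in one line: writing $c_1=m(m-a)(1+bc)+c(m-a)-bm-1$ and using $1\le m\le a$ gives $k_2=c_1\le -bm-1\le -(b+1)\le -2$ for the family $u=[m,b,c,a,\dots]$, with the cyclic analogues $c_1\le -cm-1$ and $c_1\le -am-1$ for the other two families; combined with $k_3\ge 2$ this violates the RSC uniformly, the no-common-factor hypothesis serving only (as in the excluded case $a=b=c$) to ensure one is really looking at the first resonance rather than a multiple of a smaller one. Alternatively, your own endpoint plan does work once carried out: $c_1$ is convex in $m$, and $c_1(a)=-ab-1\le -2$, $c_1(1)=c+bc-a-b-ac-abc\le -a-b\le -2$, so no intermediate $m$ and no small-parameter subcase can misbehave; had you computed these two values, your argument would be complete and essentially identical to the paper's.
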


\begin{proof}
When we look at the formulas for 3-periodic continued fraction development in section \ref{3-periodicCFE}, we can observe the following:
\begin{itemize}
\item for $u=[m,b,c,a,b,c,a,...]$ and $m=1...a$, it holds that  $k_2=c_1 \le -bm-1 < -1$ and $k_3=c_3=1+bc > 1$ 
\item for $u=[m,c,a,b,c,a,b,...]$ and $m=1...b$, it holds that  $k_2=c_1 \le -cm-1 < -1$ and $k_3=c_3=1+ca > 1$ 
\item for $u=[m,a,b,c,a,b,c,...]$ and $m=1...c$, it holds that  $k_2=c_1 \le -am-1 < -1$ and $k_3=c_3=1+ab > 1$ 
\end{itemize}
This means that the RSC are violated at all base points of the heteroclinic chain if we know that neither $k_2 \mid k_3$ nor $k_3 \mid k_2$. This is true in particular if the $k_i$ do not have a common factor as we have assumed for convenience, so Lemma \ref{thm-3per} is proven.

Note that if we had $a=b=c$, then coefficients $k_{1}, k_{2}, k_{3}$ would have a common factor, resulting in an earlier resonance as explained above. Also compare to Appendix \ref{appendix-3periodic} for a consistency check.
\end{proof}

We now try to generalize the argument above to higher periodic continued fractions. In order to do this let us make some general definitions and observations (following \cite{per54} §19\footnote{but note we have a different labelling of the coefficients as we do not consider continued fractions with enumerators different from one}, compare also \cite{khin49} §10):

For continued fractions of the form

\[
u=a_{0}+\frac{1}{a_{1}+\frac{1}{a_{2}+\frac{1}{...}}}=:[a_{0},a_{1},a_{2},...]
\]

we define the following numbers $A_{k}, B_{k}$ recursively:

\begin{eqnarray*}
A_{k}=A_{k-1}a_{k}+A_{k-2}\\
B_{k}=B_{k-1}a_{k}+B_{k-2}
\end{eqnarray*}

with $A_{-1}=1,A_{-2}=0$ and $B_{-1}=0,B_{-2}=1$, leading to $A_{0}=a_{1}$, $A_{1}=a_{0}a_{1}+1$ and
$B_{0}=1$, $B_{1}=a_{1}$.


For an (infinite) continued fraction, we define the ``tails'' as follows:

\[
\xi_{k}:=[a_{k},a_{k+1},...]
\]

Then we have the following general recursion formula for convergent
infinite continued fractions $u=[a_{0},a_{1},...,a_{k-1},\xi_{k}]$ (and $k\ge0)$:

\[
u=\xi_{0}=\frac{A_{k-1}\xi_{k}+A_{k-2}}{B_{k-1}\xi_{k}+B_{k-2}}
\]

, which can be proved by induction.\footnote{ For $k=0$, the formula holds by definition: $\xi_{0}=\frac{A_{-1}\xi_{0}+A_{-2}}{B_{-1}\xi_{0}+B_{-2}}=\frac{\xi_{0}}{1}$. 
}. Also compare \cite{khin49} §2 and §3.

Now consider pre-periodic continued fractions with pre-period $h$ and minimal period $p$, as made precise in the following definition:

\begin{defn*}
We call $u$ an $h$-pre-periodic continued fraction with pre-period $h$, minimal period $p$ $\iff$ $u=[a_{0},...a_{h-1},\overline{a_{h},a_{h+1},...,a_{h+p-1}}]$ with $a_{\nu}=a_{\nu+p}$$\forall\nu\ge h$ and $\nexists \tilde{p}<p$ s.t. $a_{\nu}=a_{\nu+\tilde{p}}$$\forall\nu\ge h$
\end{defn*}

Note that it also holds that $\xi_{\nu}=\xi_{\nu+p}$$\forall\nu\ge h$. Thus we can get
the following formulas (set $k=h$ and $k=h+p)$:

\[
\xi_{0}=\frac{A_{h-1}\xi_{h}+A_{h-2}}{B_{h-1}\xi_{h}+B_{h-2}}
\]

and

\[
\xi_{0}=\frac{A_{h+p-1}\xi_{h+p}+A_{h+p-2}}{B_{h+p-1}\xi_{h+p}+B_{h+p-2}}=\frac{A_{h+p-1}\xi_{h}+A_{h+p-2}}{B_{h+p-1}\xi_{h}+B_{k+p-2}}
\]

By solving both equations for $\xi_{h}$, we get the following quadratic
equation for $\xi_{0}:$ 

\[
c_{3}\xi_{0}^{2}+c_{2}\xi_{0}+c_{1}=0
\]

with (we abbreviate $g=h+p)$

\begin{eqnarray*}
c_{3} & = & B_{h-2}B_{g-1}-B_{h-1}B_{g-2}\\
c_{2} & = & B_{h-1}A_{g-2}+A_{h-1}B_{g-2}-A_{h-2}B_{g-1}-B_{h-2}A_{g-1}\\
c_{1} & = & A_{h-2}A_{g-1}-A_{h-1}A_{g-2}
\end{eqnarray*}

The formulas above specialize to (for $h=0$, this corresponds to
the formula for periodic continued fractions without pre-period)

\begin{eqnarray*}
c_{3} & = & B_{p-1}\\
c_{2} & = & B_{p-2}-A_{p-1}\\
c_{1} & = & -A_{p-2}
\end{eqnarray*}

and for $h=1$ to\footnote{compare to the formulas for p=1,2,3 presented in section \ref{3-periodicCFE}, as a consistency check}

\begin{eqnarray*}
c_{3} & = & -B_{p-1}\\
c_{2} & = & A_{p-1}+a_{0}B_{p-1}-B_{p}\\
c_{1} & = & A_{p}-a_{0}A_{p-1}
\end{eqnarray*}

Now we are in a position to state the main aim of this section:
\begin{conj}
\label{thm-higher-periodic}
Let $u=[a_{0},a_{1},...]$ be an (infinite) periodic continued
fraction with minimal period $p\ge3$. Then the corresponding heteroclinic chain allows Takens-Linearization at all base points.
\end{conj}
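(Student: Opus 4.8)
The conjecture asks us to show that for every periodic continued fraction $u=[\overline{a_0,\ldots,a_{p-1}}]$ with minimal period $p\ge 3$, the resonance coefficient vector $k=(k_1,k_2,k_3)$ — computed at each base point via (\ref{eq:solved for k}) — fails the Resonance Sign Condition, so that Takens-Linearization applies everywhere along the chain. The strategy mirrors the successful $p=2$ and $p=3$ arguments: at a base point $u=[m,a_{i+1},\ldots]$ we have, from the general formulas of section \ref{general-higher-periods} applied to the shifted $1$-pre-periodic expansion, a coefficient vector $(c_1,c_2,c_3)$ for $u$, hence $k=s\cdot(c_1-c_2+c_3,\ c_1,\ c_3)$. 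The key observation driving Lemmas \ref{lem-2per} and \ref{thm-3per} is that $k_2=s\,c_1$ and $k_3=s\,c_3$ have, after the substitution $u=[m,\xi]$ with the appropriate tail, signs that are forced and magnitudes that are forced to exceed $1$. So the first step is to write down, for the general period-$p$ case, closed forms for $c_1=-A_{p-2}$-type and $c_3=-B_{p-1}$-type quantities \emph{for the rotated period starting at the base point}, i.e. to express $k_2$ and $k_3$ in terms of the continuant polynomials $A_\bullet,B_\bullet$ of the cyclic shift $[a_i,a_{i+1},\ldots,a_{i-1}]$ and the bounce index $m$ with $1\le m\le a_i$.

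\textbf{Key steps.} First I would fix a base point, say the one reached after $a_i-m$ bounces past Taub point number $i$, so that $u=[m,\overline{a_{i+1},\ldots,a_{i+p-1},a_i}]$ (indices mod $p$); using the $h=1$ specialization of section \ref{general-higher-periods} with $a_0=m$ and period $(a_{i+1},\ldots,a_{i+p-1},a_i)$, I get $c_3=-B_{p-1}$, $c_2=A_{p-1}+mB_{p-1}-B_p$, $c_1=A_p-mA_{p-1}$, where now $A_\bullet,B_\bullet$ are the continuants of the rotated period. Second, I would show $|c_3|=B_{p-1}\ge 2$: since $p\ge 3$ and all partial quotients are $\ge 1$, $B_{p-1}=B_{p-2}a_{p-1}+B_{p-3}\ge B_{p-2}+B_{p-3}\ge 1+1=2$ (with strict growth as soon as any $a_\nu\ge 2$, and $B_{p-1}\ge$ Fibonacci-type bound otherwise), so $k_3=s c_3$ satisfies $|k_3|>1$. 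Third — the crux — I would show $|c_1|=|A_p-mA_{p-1}|>1$ for all valid $m$: writing $A_p=A_{p-1}a_p+A_{p-2}=A_{p-1}a_i+A_{p-2}$ (recall the last rotated quotient is $a_i$ and $m\le a_i$), we get $A_p-mA_{p-1}=(a_i-m)A_{p-1}+A_{p-2}$; since $a_i-m\ge 0$ and $A_{p-2}\ge B_{p-2}\ge 1$, this is $\ge A_{p-2}\ge 1$, with equality $A_{p-2}=1$ only possible when $p-2\le 0$, excluded by $p\ge 3$; hence $A_{p-2}\ge 2$ whenever $p\ge 4$, and for $p=3$ a direct check (as in Lemma \ref{thm-3per}) gives $c_1\le -bm-1<-1$ in the rotated notation. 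Fourth, having $|k_2|>1$ and $|k_3|>1$ with $k_2=s c_1$, $k_3=s c_3$ of the \emph{same} sign $s\cdot(\text{sign})$ — here one must check, as in the $p=3$ case, that $c_1<0$ and $c_3<0$ (so that after multiplying by $s$ they stay equal in sign), which follows because $c_1=(a_i-m)A_{p-1}+A_{p-2}>0$ actually forces $s=-1$ to make $k$ a genuine resonance matching the sign convention (\ref{eq:stern2}), and then $k_2<-1$, $k_3<-1$ while $k_1$ is whatever it is: two coefficients strictly less than $-1$ already violates the RSC by the remark after the (RSC) definition. Finally, I would handle the common-factor caveat exactly as in Lemma \ref{thm-3per}: the conclusion that $k$ violates the RSC is stated for the reduced vector, and minimal period $p\ge 3$ with not-all-equal quotients should prevent $\gcd(k_1,k_2,k_3)$ from collapsing the vector to something with $|k_2|$ or $|k_3|$ equal to $1$ — but this is precisely where care is needed.

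\textbf{Main obstacle.} The genuine difficulty, already flagged in the text (``it is not so easy anymore to find conditions that assure in general that the resulting coefficients do not have a common factor''), is controlling $\gcd(k_1,k_2,k_3)=\gcd(c_1-c_2+c_3,c_1,c_3)=\gcd(c_2,\gcd(c_1,c_3))$. If this gcd is large it could in principle reduce $|k_2|$ or $|k_3|$ down to $1$ and the RSC-violation argument collapses. My plan is to bound it: since $c_3=-B_{p-1}$ and $c_1=(a_i-m)A_{p-1}+A_{p-2}$, and $\gcd(A_{p-1},B_{p-1})=1$ always (standard continuant identity $A_{p-1}B_{p-2}-A_{p-2}B_{p-1}=\pm1$), any common divisor $d$ of $c_1$ and $c_3=-B_{p-1}$ divides $(a_i-m)A_{p-1}+A_{p-2}$ and $B_{p-1}$; combined with $A_{p-2}B_{p-1}\equiv 1 \pmod{A_{p-1}}$ one should be able to show $d\mid$ something of size $O(1)$, and then rule out the bad cases $d=B_{p-1}$ or $d=B_{p-1}/2$ using minimality of the period $p$. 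I expect this step to be the main technical battle; whether it closes cleanly for \emph{all} $p\ge 3$ without an admissibility hypothesis is exactly what makes the statement a conjecture rather than a theorem, and an honest write-up will either supply this gcd bound or isolate precisely the residual case(s) where the period-$p$ continued fraction could conspire to produce an unwanted common factor.
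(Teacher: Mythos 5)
Your proposal follows essentially the same route as the paper's own "idea of proof": apply the $h=1$ specialization of the general continuant formulas at each rotated base point $u=[m,\overline{\dots}]$, observe $k_3=c_3=-B_{p-1}$ with $|k_3|>1$ for $p\ge 3$ and $k_2=c_1=(a_i-m)A_{p-1}+A_{p-2}>1$, and conclude that the Resonance Sign Condition is violated. You also correctly isolate the same obstruction the paper names — controlling the common factor of $(k_1,k_2,k_3)$ so that cancellation cannot reduce $|k_2|$ or $|k_3|$ to $1$ — which is precisely why the statement remains a conjecture rather than a theorem; your proposed gcd bound via $A_{p-1}B_{p-2}-A_{p-2}B_{p-1}=\pm1$ is a reasonable avenue the paper does not pursue.
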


\begin{proof}(idea of proof, but note the remark below)\\
Let $u=[a_{0},a_{1},...]$ be an (infinite) periodic continued fraction.
We need to show that the NRC's are satisfied at all base points of
the heteroclinic chain. Because of the form of the Kasner-map, we have to check all Kasner-parameters of the form $u=u_{m}=[m,\overline{a_{1},a_{2},...,a_{p-1},a_{p}}]$,
i.e. it holds that $a_{0}=m$ (with $1\le m\le a_{p}$) and $a_{\nu}=a_{\nu+p}$,
but now only $\forall\nu\ge1$. From the formulas above (case $h=1)$
we observe the following for the corresponding coefficients of the
resonances of the eigenvalues:

\[
k_{3}=c_{3}=-B_{p-1}<-1
\]
 
where we need our assumption that $p\ge3$ as $B_{1}=a_{1}$ which might be one, but $B_{2}=a_{2}a_{1}+1$ which is bigger than one. Also

\[
k_{2}=c_{1}=A_{p}-a_{0}A_{p-1}=(a_{p}-a_{0})A_{p-1}+A_{p-2}>1
\]

because we know that $a_{0}=m\le a_{p}$ and $A_{1}=a_{0}a_{1}+1$ is bigger than one. That's why the ``Resonance
Sign Condition'' is violated at all base points, and Takens-Linearization
is possible.
\end{proof}

The reason why we don't call the Conjecture above a Theorem is that we are not able to exclude in general that $c_{1}$ divides $c_{3}$ or vice versa, which is essential for the proof above to work out. We believe it is possible to prove this in general for most periodic continued
fraction with minimal period $p\ge3$, probably with a small set of exceptions, but this is an issue for further research. 


\newpage

\section{Details on the Proof for Stable Manifolds}
\label{core-proof}

In this section, we complete the proof of Theorem \ref{thm:main-bix} by showing that there is a $C^{1}$-hyperbolic structure for the return map in Bianchi IX after linearizing at all base points of a heteroclinic chain. This then leads to a $C^{1}$-stable manifold, as claimed.


We procced along the lines and very close to the paper of Béguin \cite{beg10}, but
we adapt the notation to our needs and the situation of a periodic chain that Béguin does not consider. 

Also compare to the papers by Liebscher et al. \cite{lieetal10, lieetal12}, where they work in a Lipschitz-setting without linearizing at the Kasner circle. There, the following return maps are considered 

\[
\Phi_{k}^{return}=\Phi_{k}^{glob}\circ\Phi_{k}^{loc}:\Sigma_{k}^{in}\rightarrow\Sigma_{k+1}^{in}
\]

where the index $k$ stands for the base points on the Kasner circle of the heteroclinic chain, i.e. $\Phi_{k}^{return}$ maps from one In-section to the next. It is shown that those maps satisfy
the necessary cone conditions to allow for a graph-transform on Lipschitz-graphs on a subset of $\Sigma^{in}$ including the origin (which stands for
the heteroclinic orbit). This then leads to the stable manifold result. 

However, like Béguin \cite{beg10}, we will use a collection $\Phi^{return}_B$
of these return maps for all base points of the set $B \subset \mathcal{K}$. We then show that there exists a $C^{1}$- hyperbolic structure for a suitable subset of the corresponding In-sections $\Sigma_{B}^{in}$. This results in a $C^{1}$-stable manifold.

\subsection{Application of Takens Theorem}

Let $B=\{p_{1},...,p_{n}\}$ be the collection of base points on the
Kasner circle of the periodic heteroclinic chain we are looking at.
Then, as we have chosen an admissible periodic chain that satisfies
the necessary Non-Resonance-Conditions by assumption, we can chose co-ordinates
near each point $p_{k}\in B$ such that the vector field has the form
described by the Takens Theorem, i.e. it is essentially linear in
a neighbourhood $U_{p_{k}}$. More precisely, the application of Takens
Linearization Theorem is done in the following form (compare Béguin, p.10): 
\begin{thm}
\label{thm:local-form-vf}
Let $p\in B$ be any point of the set of admissible base points $B$.
Then there exists a Takens-Neighbourhood $U_{p}$ of $p$ in the phase-space
of the Wainwright-Hsu ODEs $\mathcal{W}$ and a $C^{1}$-coordinate-system
on $U_{p}$ such that the Wainwright-Hsu vector field $X^{W}$ can
be written as

\[
X^{W}(x^{c},x^{s},x^{ss},x^{u})=\lambda_{s}(x^{c})x^{s}\frac{\partial}{\partial x^{s}}+\lambda_{ss}(x^{c})x^{ss}\frac{\partial}{\partial x^{ss}}+\lambda_{u}(x^{c})x^{u}\frac{\partial}{\partial x^{u}}
\]
where $\lambda_{ss}(x^{c})<\lambda_{s}(x^{c})<0<\lambda_{u}(x^{c})$
for all $x^{c}$. 
\end{thm}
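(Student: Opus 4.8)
The plan is to obtain the statement as a direct application of the Takens Linearization Theorem, in the parametrized ``curve of equilibria'' form that Béguin \cite{beg10} uses, once its hypotheses are checked at $p$. First I would recall the geometric set-up: the Wainwright--Hsu vector field $X^{W}$ is polynomial, hence $C^{\infty}$, and restricts to a smooth vector field on the invariant subset cut out by the constraints. The Kasner circle $\mathcal{K}$ is a curve of equilibria, and at a non-Taub point $p\in B$ the linearization $DX^{W}(p)$ has a simple zero eigenvalue, with eigenvector tangent to $\mathcal{K}$, together with the three real nonzero eigenvalues given by (\ref{eq:EVinBIXinU}). These three are pairwise distinct on $\mathcal{K}$ away from the Taub points: from (\ref{eq:EVinBIXinU}) one has $\lambda_{1}<0<\lambda_{2},\lambda_{3}$ for $u>0$, while $\lambda_{2}=\lambda_{3}$ only at $u=1$, i.e. at a Taub point. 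Following the heteroclinic chain towards the big bang reverses the time orientation, so that in the relevant time direction the hyperbolic part at $p$ has one unstable and two stable directions; relabelling them as $x^{u},x^{s},x^{ss}$ by magnitude gives $\lambda_{ss}(p)<\lambda_{s}(p)<0<\lambda_{u}(p)$.

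Next I would check the one remaining hypothesis of Takens' theorem, the Sternberg non-resonance conditions among the hyperbolic eigenvalues. This is exactly what admissibility provides: by assumption $B$ is the set of base points of an admissible periodic heteroclinic chain, so by the analysis of Section \ref{sub:AnalyticResultsBIX} --- in particular Theorem \ref{thm-2per}, whose proof shows that the resonant coefficient vector violates the RSC at every base point and hence does not obstruct the normal form --- the eigenvalues $\lambda_{1},\lambda_{2},\lambda_{3}$ at $p$ satisfy the non-resonance conditions needed for a $C^{1}$ linearization. With smoothness, partial hyperbolicity and non-resonance in hand I would then invoke the parametrized Takens theorem: since a neighbourhood of $p$ in $\mathcal{K}$ consists entirely of equilibria, $\mathcal{K}$ serves as a dynamically trivial center manifold through $p$, and Takens' theorem relative to this normally hyperbolic center manifold yields a $C^{1}$-coordinate system $(x^{c},x^{s},x^{ss},x^{u})$ on a Takens-Neighbourhood $U_{p}$ in which $\mathcal{K}\cap U_{p}=\{x^{s}=x^{ss}=x^{u}=0\}$, the coordinate $x^{c}$ parametrizes $\mathcal{K}$, and $X^{W}$ takes the asserted fibrewise-linear form with coefficients depending only on $x^{c}$. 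Those coefficients are just the eigenvalue functions (\ref{eq:EVinBIXinU}) evaluated at the point of $\mathcal{K}$ with center coordinate $x^{c}$; they are $C^{1}$ in $x^{c}$ (indeed real-analytic, in a suitable center coordinate) and inherit the inequalities $\lambda_{ss}(x^{c})<\lambda_{s}(x^{c})<0<\lambda_{u}(x^{c})$ from the sign and ordering analysis above.

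The step I expect to be the main obstacle is the passage from the textbook statement of Takens' theorem for a single equilibrium to the version for a whole curve of equilibria, with the eigenvalues appearing as honest functions of the center variable $x^{c}$: this is the normally-hyperbolic, parametrized form of the theorem, and I would carry it out by following Béguin's argument \cite{beg10} line by line, adapting the notation to the periodic chain. Two smaller points to nail down along the way: verifying directly from the Wainwright--Hsu equations that, on the constraint surface, the center eigenspace at $p$ is exactly one-dimensional, so that there is no hidden degeneracy beyond the zero eigenvalue tangent to $\mathcal{K}$; and shrinking $U_{p}$ so that the resulting chart is regular enough and of controlled size, which is what makes it possible in the following subsections to glue this local model to the local and global return maps $\Phi_{k}^{loc}$, $\Phi_{k}^{glob}$.
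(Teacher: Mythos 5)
Your proposal follows essentially the same route as the paper: apply Takens' theorem at each admissible base point (the non-resonance hypothesis being supplied by admissibility) and exploit the fact that the Kasner circle is a curve of equilibria to obtain the fibrewise-linear form with eigenvalue functions of $x^{c}$. The step you flag as the main obstacle is precisely what the paper's proof supplies: its Takens theorem first gives a normal form with a center drift $\phi(x^{c})\frac{\partial}{\partial x^{c}}$ and an undiagonalized $2\times 2$ stable block $a_{ij}(x^{c})$, and the proof then shows $\phi\equiv 0$ by identifying the zero set of this normal form with $\mathcal{K}\cap U_{p}$, and diagonalizes the stable block simultaneously (smoothly in $x^{c}$) using the distinctness of the two stable eigenvalues away from the Taub points --- the two ingredients you already name.
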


\begin{proof}
A direct application of the Takens-Theorem \ref{thm:takens-flow} (from chapter \ref{takens-chapter}) gives the existence of a coordinate system $(x^{c},x^{s1},x^{s2},x^{u})$ in $U_{p}$
s.t. $X^{W}$ has the following form in these coordinates:

\[
X^{W}(x^{c},x^{s1},x^{s2},x^{u})=\phi(x^{c})\frac{\partial}{\partial x^{c}}+\sum_{i,j=1}^{2}a_{ij}(x^{c})y^{si}\frac{\partial}{\partial y^{sj}}+b(x^{c})x^{u}\frac{\partial}{\partial x^{u}}
\]
For the vector field $X^{W}$ in the original coordinates, the set
$\mathcal{K}\cap U_{p}$ is the local center-manifold in the neighbourhood
$U_{p}$ at the point p, and it consists of equilibria. As the vector
field above vanishes on $K=\{x^{s1}=x^{s2}=x^{u}=0\}$ and nowhere
else, it follows that $K=\mathcal{K}\cap U_{p}$. This also means
that $\phi\equiv0$ in the neighbourhood $U_{p}$, i.e. there is no
drift at all in the center-direction. Now fix $\{x^{c}=\xi\}$. As
can be seen from the formula above, the vector field $X^{W}(x^{c},x^{s1},x^{s2},z^{u})$
is linear on the restruction to this submanifold. A linear change
of coordinates then diagonalizes the $2 \times 2$-matrix $(a_{ij})_{i,j \in \{1,2\}}$,
as we have 2 distinct real stable eigenvalues of $X^{W}$ at the point
$(\xi,0,0,0)$%
, and this diagonalization can be done simultaneously, as eigenvalues
and eigendirections depend in a smooth way on $\xi$. Label these
new coordinates $(x^{c},x^{s},x^{ss},x^{u})$ and observe that we
have found the claimed local form of the vector field

\[
X^{W}(x^{c},x^{s},x^{ss},x^{u})=\lambda_{s}(x^{c})x^{s}\frac{\partial}{\partial x^{s}}+\lambda_{ss}(x^{c})x^{ss}\frac{\partial}{\partial x^{ss}}+\lambda_{u}(x^{c})x^{u}\frac{\partial}{\partial x^{u}}
\]

\end{proof}
For the rest of the section, we will use the follwing coordinates: Near the Kasner-circle,
we take the coordinates given by the Takens-Linearization-Theorem,
at each base point $p_{k}$ of the heteroclinic chain, and otherwise,
we stick to the coordinates $N_{i},\Sigma_{+/-}$ of the Wainwright-Hsu-System.
The different coordinate systems give rise to the following metrics: the Riemanian
metric $g_{p}=dx^{c}\wedge dx^{c}+dx^{s}\wedge dx^{s}+dx^{ss}\wedge dx^{ss}+dx^{u}\wedge dx^{u}=(dx^{c})^{2}+(dx^{s})^{2}+(dx^{ss})^{2}+(dx^{u})^{2}$
for the Takes-coordinates in a neighbourhood $U_{p}$ near a point
$p$ of the Kasner circle, and the Riemanian metric $h=dN_{1}^{2}+dN_{2}^{2}+dN_{3}^{2}+d\Sigma_{+}^{2}+d\Sigma_{-}^{2}$.
Later we use a ``global'' Riemannian metric adapted to our set of
base points $B$ by defining $g_{B}$ such that 

\begin{equation}
\label{global-metric}
g_{B}\restriction U_{p}=g_{p} \forall p\in B
\end{equation}
For the local passage, which we will consider next, we are entirely in the neighbourhood $U_{p}$ and can use the ``local'' metric $g_{p}=(dx^{c})^{2}+(dx^{s})^{2}+(dx^{ss})^{2}+(dx^{u})^{2}$
.

\subsection{Local Passage}

Our next step is to deal with the local passage near an equilibrium
of the Kasner circle $\mathcal{K}$. Figure \ref{detail-LocalMap} shows a graphic illustration
of the situation in Bianchi IX - note that we are in the lucky situation
here that the incoming stable eigenvalue is always stronger that the
outgoing unstable eigenvalue, this will change in $BVI_{-\frac{1}{9}}$
that we deal with in chapter \ref{bvi-chapter}.\\

\begin{figure}
\setlength{\unitlength}{0.019\textwidth}
\centering
\begin{picture}(43,30)(-8,-5)
\put(-8,-5){\makebox(0,0)[bl]{\includegraphics[width=43\unitlength]{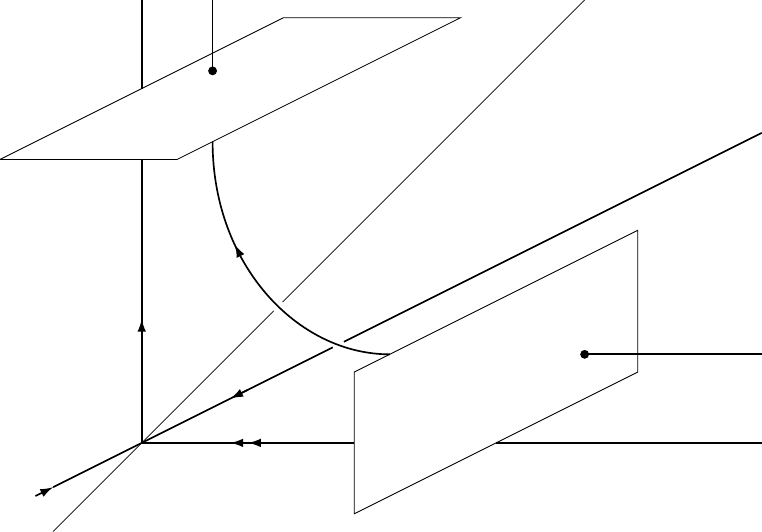}}}
\put(35,-0.5){\makebox(0,0)[rt]{$x^{ss}$}}
\put(-0.5,25){\makebox(0,0)[rt]{$x^u$}}
\put(35,17.5){\makebox(0,0)[rb]{$x^s$}}
\put(25,25){\makebox(0,0)[lt]{$x^c$}}
\put(28.5, 8){\makebox(0,0)[l]{$\Sigma^{in}$}}
\put(13,24.5){\makebox(0,0)[b]{$\Sigma^{out}$}}
\put(25,5){\makebox(0,0)[br]{$x^{in}$}}
\put(4,21){\makebox(0,0)[tl]{$x^{out}$}}
\end{picture}
\caption{\label{detail-LocalMap}Local passage $\Phi^{loc}$.}
\end{figure}

Now we come to the definition of the local In- and Out-Sections%
{} illustrated in the picture above: For a point $p_{k}\in B$, we first define the 
box $V_{p}$ as $V_{p}=V_{p}(\alpha,\beta,\epsilon):=\{q=(x_{q}^{c},x_{q}^{s},x_{q}^{ss},x_{q}^{u})\in U_{p}|0\le x_{q}^{s},x_{q}^{ss},x_{q}^{u}\le\epsilon,\alpha\le x_{q}^{c}\le\beta\}$
and $\alpha,\beta,\epsilon$ are chosen so small the box $V_{p}$
lies completely inside the Takens-neighbourhood $U_{p}$. Denote their
union by $V_{B}=\bigcup_{k=1}^{n}V_{p_{k}}$.

Then define the sections by $\Sigma_{k}^{in,ss}:=V_{p_{k}}\cap\{x^{ss}=\epsilon\}$
and $\Sigma_{k}^{out}:=V_{p_{k}}\cap\{x^{u}=\epsilon\}$. Finally define the ``collections'' of sections for the whole set of
base-points $B$: $\Sigma_{B}^{in,s}=\bigcup_{k=1}^{n}\Sigma_{k}^{in,s}$
,$\Sigma_{B}^{in,ss}=\bigcup_{k=1}^{n}\Sigma_{k}^{in,ss}$ and $\Sigma_{B}^{out}=\bigcup_{k=1}^{n}\Sigma_{k}^{out}$
, and finally $\Sigma_{B}^{in}=\Sigma_{B}^{in,s}\cup\Sigma_{B}^{in,ss}$.

We need some more notation before we can introduce the main theorem of this section.
Decompose the tangent spaces of the sections defined above into the parts of the hyperbolic
direction ($V^{h}$, $W^{h}$), on the one hand, and the center-component
($V^{c}$,$W^{c}$), on the other hand. For this, let $q\in\Sigma_{B}^{in,ss}$
and $r\in\Sigma_{B}^{out}$:
\begin{itemize}
\item $T_{q}\Sigma_{B}^{in,ss}=V_{q}^{h}\oplus V_{q}^{c}$, where $V_{q}^{h}=span\{\frac{\partial}{\partial x^{s}}(q),\frac{\partial}{\partial x^{u}}(q)\}$,
i.e. the additional stable direction and the unstable direction, and
$V_{q}^{c}=span\{\frac{\partial}{\partial x^{c}}(q)\}$
\item $T_{r}\Sigma_{B}^{out}=W_{r}^{h}\oplus W_{r}^{c}$, where $W_{r}^{h}=span\{\frac{\partial}{\partial x^{ss}}(r),\frac{\partial}{\partial x^{s}}(r)\}$,
i.e. the both stable directions, because we are in the out-section,
and $W_{r}^{c}=span\{\frac{\partial}{\partial x^{c}}(r)\}$
\end{itemize}
Note that one point of this construction is to ``collect'' also
the tangent spaces like the other objects before, i.e. to talk about
the decomposition of the tangent bundle of the set $\Sigma_{B}^{out}$,
which is possible because all object depend smoothly on the base point:
\begin{itemize}
\item $T\Sigma_{B}^{out}=V^{h}\oplus V^{c}$
\item $T\Sigma_{B}^{out}=W^{h}\oplus W^{c}$
\end{itemize}
Now we are in the position to state the theorem about the local passage. Recall that $H_B$ stands for the set of all heterclinic Bianchi-II-orbits connecting base points of the set $B$ (see chapter \ref{bianchi-defs}, section \ref{kasner-map}):
\begin{thm}
Assume that, for all $p\in B$, the vector field has been according
brought to the form as in the conclusion of Theorem \ref{thm:local-form-vf}.
The local passage map $\Phi_{B}^{loc}:\Sigma_{B}^{in}\rightarrow\Sigma_{B}^{out}$
is a $C^{1}$-map that satifies, for $q\in H_B\cap\Sigma_{B}^{in}$:
\begin{itemize}
\item $\Phi_{B}^{loc}$ contracts super-linearly in the hyperbolic directions,
i.e. $d\Phi_{B}^{loc}(q)(v)=0 \; \forall v\in V_{q}^{h}$
\item $\Phi_{B}^{loc}$ is the identity in the center-direction, i.e. 
\begin{enumerate}
\item $d\Phi_{B}^{loc}(q)(V_{q}^{c})=W_{\Phi_{B}^{glob}(q)}^{c}$ 
\item $||d\Phi_{B}^{loc}(q)(v)||_{g_{p}}=||v||_{g_{p}}\forall v\in V_{q}^{c}$
\end{enumerate}
\end{itemize}
\end{thm}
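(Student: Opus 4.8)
The plan is to exploit the explicit linear form of the vector field $X^{W}$ on each Takens neighbourhood $U_{p}$ coming from Theorem \ref{thm:local-form-vf}, for which the flow can be written down in closed form. First I would fix a base point $p = p_{k}\in B$ and work entirely inside $U_{p}$ with the local metric $g_{p}$. Since $\dot x^{c}=0$, $\dot x^{s}=\lambda_{s}(x^{c})x^{s}$, etc., the time-$t$ flow starting from $(x^{c},x^{s},x^{ss},x^{u})$ is $(x^{c},\,e^{\lambda_{s}(x^{c})t}x^{s},\,e^{\lambda_{ss}(x^{c})t}x^{ss},\,e^{\lambda_{u}(x^{c})t}x^{u})$. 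A point of $\Sigma_{k}^{in,ss}=V_{p_{k}}\cap\{x^{ss}=\epsilon\}$ leaves the box through $\Sigma_{k}^{out}=V_{p_{k}}\cap\{x^{u}=\epsilon\}$ at the transition time $\tau$ determined by $e^{\lambda_{u}(x^{c})\tau}x^{u}=\epsilon$, i.e. $\tau = \frac{1}{\lambda_{u}(x^{c})}\log\!\frac{\epsilon}{x^{u}}$; the case of $\Sigma_{k}^{in,s}$ is symmetric. Substituting this $\tau$ gives $\Phi_{B}^{loc}$ explicitly: the $x^{c}$-coordinate is preserved, and the hyperbolic coordinates are multiplied by powers of $x^{u}/\epsilon$ with exponents $-\lambda_{s}/\lambda_{u}$ and $-\lambda_{ss}/\lambda_{u}$, both strictly positive because $\lambda_{ss}<\lambda_{s}<0<\lambda_{u}$. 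Since all these expressions are $C^{1}$ in the starting point (the transition time depends $C^{1}$ on $x^{c}$ and $x^{u}$, and $x^{u}>0$ on the relevant part of the section away from $H_{B}$, while on $H_{B}$ one checks the limit directly), $\Phi_{B}^{loc}$ is a $C^{1}$-map; the smooth dependence of the Takens coordinates on the base point lets one assemble the individual maps into a single $C^{1}$-map on the collections $\Sigma_{B}^{in}\to\Sigma_{B}^{out}$.

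Next I would verify the three stated properties at a point $q\in H_{B}\cap\Sigma_{B}^{in}$. A heteroclinic point in the in-section has $x^{s}=x^{u}=0$ (it lies on the strong-stable orbit through $p$, respectively the unstable orbit), so $q$ has only its $x^{ss}$- and $x^{c}$-coordinates nonzero. Differentiating the explicit formula for $\Phi_{B}^{loc}$ at such a $q$: the exponents $-\lambda_{s}/\lambda_{u}$ and $-\lambda_{ss}/\lambda_{u}$ are strictly positive, so a factor $(x^{u}/\epsilon)^{-\lambda_{s}/\lambda_{u}}$ kills any derivative in the $x^{s}$- and $x^{u}$-directions as $x^{u}\to 0$ — more precisely, $d\Phi_{B}^{loc}(q)$ annihilates $\partial/\partial x^{s}(q)$ and $\partial/\partial x^{u}(q)$, which spans exactly $V_{q}^{h}$; this is the super-linear contraction. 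For the center direction, since $x^{c}$ is literally carried through unchanged and the target center direction $W^{c}$ is again $\mathrm{span}\{\partial/\partial x^{c}\}$, we get $d\Phi_{B}^{loc}(q)(\partial/\partial x^{c}(q)) = \partial/\partial x^{c}(\Phi_{B}^{glob}(q))$ (up to composing with the identification used for $\Phi_{B}^{glob}$), giving both $d\Phi_{B}^{loc}(q)(V_{q}^{c}) = W^{c}_{\Phi_{B}^{glob}(q)}$ and, because $g_{p}$ is the standard Euclidean metric in these coordinates and the coefficient of $\partial/\partial x^{c}$ is exactly $1$, the isometry statement $\|d\Phi_{B}^{loc}(q)(v)\|_{g_{p}} = \|v\|_{g_{p}}$ for $v\in V_{q}^{c}$.

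The main obstacle is the $C^{1}$-regularity of $\Phi_{B}^{loc}$ up to and including the heteroclinic locus: the transition time $\tau$ blows up logarithmically as $x^{u}\to 0$, so one must check that the positive exponents $-\lambda_{s}/\lambda_{u}$, $-\lambda_{ss}/\lambda_{u}$ genuinely dominate this logarithmic divergence in all first partial derivatives, and that the one-sided limits of these derivatives along $H_{B}$ exist and match (so that the "collected" map on $\Sigma_{B}^{in}=\Sigma_{B}^{in,s}\cup\Sigma_{B}^{in,ss}$ is $C^{1}$ across the gluing). This is a standard but delicate estimate — it is exactly the point where one uses that the incoming eigenvalue is stronger than the outgoing one, i.e. $|\lambda_{ss}|>|\lambda_{u}|$ is comfortably more than enough, and even $|\lambda_{s}|<|\lambda_{u}|$ would still be fine here since all we need is that the exponents are positive — and it follows the corresponding argument in Béguin \cite{beg10}, adapted to the periodic collection of base points. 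Everything else is a direct computation with the linear flow.
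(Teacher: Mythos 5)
Your overall route is the same as the paper's: integrate the linear Takens normal form, eliminate the transit time through the exit condition on $x^u$, and read the claims off the explicit exponents; the center-direction argument (no drift, Euclidean $g_p$ in Takens coordinates) also matches the paper's proof. However, there is a genuine error in how you identify the key condition. You assert that ``$|\lambda_{ss}|>|\lambda_u|$ is comfortably more than enough, and even $|\lambda_s|<|\lambda_u|$ would still be fine here since all we need is that the exponents are positive.'' Positivity is not enough. Write $\alpha=-\lambda_s/\lambda_u$ and $\beta=-\lambda_{ss}/\lambda_u$; on $\{x^{ss}=\epsilon\}$ the local map has components proportional to $(x^u)^{\alpha}x^s$ and $(x^u)^{\beta}$. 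If $0<\beta<1$, then $x^u\mapsto(x^u)^{\beta}$ is not even differentiable at $x^u=0$, i.e. at the heteroclinic point, so neither the $C^1$-claim nor $d\Phi_B^{loc}(q)(\partial/\partial x^u)=0$ survives. If $0<\alpha<1$, then $\partial_{x^u}\bigl[(x^u)^{\alpha}x^s\bigr]=\alpha\,(x^u)^{\alpha-1}x^s$ is unbounded near $\{x^u=0,\;x^s\neq0\}$ and discontinuous at $q$ (along the curve $x^s=(x^u)^{1-\alpha}$ it tends to $\alpha\neq0$), so $\Phi_B^{loc}$ fails to be $C^1$ on any relative neighbourhood of $q$ in $\Sigma_B^{in}$ --- and $C^1$-regularity near the heteroclinic points is exactly what the theorem asserts and what the subsequent hyperbolic-structure argument for the return map needs. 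Your remark that the derivative vanishes at $q$ because $x^s(q)=0$ only rescues the pointwise statement for the $x^s$-component at $q$ itself; it neither gives $C^1$-ness nor helps the $(x^u)^{\beta}$-component.

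The condition that both exponents exceed $1$, i.e. $|\lambda_u|<|\lambda_s|<|\lambda_{ss}|$ away from the Taub points, is therefore not a convenience but the substantive point of this step; it is precisely the ``lucky situation'' of Bianchi IX emphasized in the paper, and it is what breaks down in Bianchi VI$_{-1/9}$. With that correction (replace ``exponents positive'' by ``exponents $>1$'' and drop the parenthetical about $|\lambda_s|<|\lambda_u|$), your argument coincides with the paper's proof.
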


\begin{proof}
Let $p\in B$ be a point from the set of admissible base points. Because
of Theorem \ref{thm:local-form-vf}, the local passage near the Kasner
circle $\Phi_{p}^{loc}$ in a neighbourhood $U_{p}$ can be calculated
explicitly (with $x_{in}^{ss}=1$ in $\Sigma_{p}^{in}$ and $x_{out}^u=1$
in $\Sigma_{p}^{out}$ after approriate scaling): 

\begin{eqnarray}
\label{compute-LP-start}
x_{out}^{s} & = & e^{\lambda_{s}t_{loc}}\cdot x_{in}^{s}=(x_{in}^{u})^{-\frac{\lambda_{s}}{\lambda_{u}}}\cdot x_{in}^{s}\\
x_{out}^{ss} & = & e^{\lambda_{ss}t_{loc}}\cdot x_{in}^{ss}=(x_{in}^{u})^{-\frac{\lambda_{ss}}{\lambda_{u}}}\\
x_{in}^{u} & = & e^{-\lambda_{u}t_{loc}}\cdot x_{out}^{u}
\label{compute-LP-end}
\end{eqnarray}

By solving the third equation for the local passage time $t_{loc}$,
one obtains the following formulas for $\Phi_{p}^{loc}:\Sigma_{p}^{in}\rightarrow\Sigma_{p}^{out}$
(when $x^{u}>0$):

\[
\Phi_{p}^{loc}(x^{c},x^{s},1,x^{u})=(x^{c},(x_{in}^{u})^{-\frac{\lambda_{s}}{\lambda_{u}}}\cdot x_{in}^{s},(x_{in}^{u})^{-\frac{\lambda_{ss}}{\lambda_{u}}},1)
\]
 and for $x^{u}=0$, we get (when following the heteroclinic orbit)

\[
\Phi_{p}^{loc}(x^{c},x^{s},1,0)=(x^{c},0,0,1)
\]

As the above equations show, the main point for understanding the
local passage is the relation of the eigenvalues. In Bianchi IX, we know
that it holds (away from the Taub points):

\[
|\lambda_{u}|<|\lambda_{s}|<|\lambda_{ss}|
\]

, i.e. the absolute value of the unstable eigenvalue is strictly smaller
than the absolute value of the two stable eigenvalues. This can be
seen from the formulas (\ref{eq:EVinBIXinU}) expressing the eigenvalues in terms of the Kasner parameter $u$, see chapter \ref{bianchi-defs}, section \ref{ev-formula-section}). That's why it holds for the fractions which appear in the exponents of the formulas above:

\[
-\frac{\lambda_{s}}{\lambda_{u}},-\frac{\lambda_{ss}}{\lambda_{u}}>1
\]

and observe that both are necessarily positive because stable and
unstable eigenvalues have opposite signs (note that this is even indepent of the chosen
time direction towards/away from the big bang). This yields the claimed
$C^{1}$-map and the super-linear contraction in the hyperbolic directions
for the map $\Phi_{p}^{loc}$.

As the vector field is completely linear in the Takens-neighbourhood,
it trivially holds that $x_{out}^{c}=e^{0}\cdot x_{in}^{c}$, i.e.
we have not drift and $\Phi^{loc}$ is just the identity in the center-direction.

These observations hold for the local passage $\Phi_{p}^{loc}:\Sigma_{p}^{in}\rightarrow\Sigma_{p}^{out}$
at any admissible base point $p\in B$, and therfor also for the collection
$\Phi_{B}^{loc}:\Sigma_{B}^{in}\rightarrow\Sigma_{B}^{out}$.
\end{proof}

\subsection{Global Passage}

Now we deal with the global passage. For the proof of the main theorem
in this section, we consider two maps which map from the respective sections onto the Kasner circle by following the heteroclinic orbit (compare \cite{beg10}, p.19):

$\alpha: H_B\cap\Sigma_{B}^{out}:\rightarrow\mathcal{K}\cap V_{B}$

$\omega: H_B\cap\Sigma_{B}^{in}:\rightarrow\mathcal{K}\cap V_{B}$

where we recall that $H_B$ stands for the Bianchi-II-heteroclinics and $V_{B}$
is the collection of Takens-neighbourhoods (or the boxes, more precisely)
constructed above when dealing with the local passage. At this point,
we recall how we defined our global metric $g_B$, see (\ref{global-metric}).
It is composed of the Riemanian metric $g_{p}=(dx^{c})^{2}+(dx^{s})^{2}+(dx^{ss})^{2}+(dx^{u})^{2}$
for the Takes-coordinates in a neighbourhood $U_{p}$ near a point
$p \in B$ of the Kasner circle, and the Riemanian metric $h=dN_{1}^{2}+dN_{2}^{2}+dN_{3}^{2}+d\Sigma_{+}^{2}+d\Sigma_{-}^{2}$ otherwise. We may assume that both metics coincide when restricted to $\mathcal{K}\cap U_{p_{i}}$, because the local vector field has no center-component at the Kasner circle, i.e. one can replace center coordinate $x$
by $\phi(x)$ for a diffeo $\phi$ without changing the vector field.

This means that both maps $\alpha$, $\omega$ are local $C^{1}$-isometries for the metrics
induced by the global metric $g_{B}$ on the sets above, and we will use this fact in our proof below. 
\begin{thm}
There exits a neighbourhood $\mathcal{V}$ of $H_B\cap\Sigma_{B}^{out}$
in \textup{$\Sigma_{B}^{out}$ }\textup{\emph{such that the global
passage map}} 

\begin{eqnarray*}
\Phi_{B}^{glob}:\Sigma_{B}^{out} & \rightarrow & \Sigma_{B}^{in}\\
\mathcal{V} & \rightarrow & \Phi_{B}^{glob}(\mathcal{V})
\end{eqnarray*}

is a $C^{1}$-map on $\mathcal{V}$ and a diffeomorphism onto its image.

$\Phi_{B}^{glob}$expands in the center direction, i.e. for \textup{$r\in H_B\cap\Sigma_{B}^{out}$}, it satisfies
\begin{enumerate}
\item $d\Phi_{B}^{glob}(r)(W_{q}^{c})=V_{\Phi_{B}^{glob}(r)}^{c}$ 
\item $\exists\kappa>1:||d\Phi_{B}^{glob}(r)(w)||_{g_{B}}\ge\kappa||w||_{g_{B}}\forall w\in W_{r}^{c}$

\end{enumerate}
\end{thm}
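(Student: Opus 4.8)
The strategy is to localize everything at a single base point $p \in B$ and prove the statement for the individual global map $\Phi_p^{glob}: \Sigma_p^{out} \to \Sigma_{p'}^{in}$ (where $p'$ is the successor of $p$ in the chain), then assemble the finitely many local statements into the claim for the collection $\Phi_B^{glob}$. The map $\Phi_p^{glob}$ is, by definition, the time-$T$ flow map of the Wainwright-Hsu vector field $X^W$ along the Bianchi-II heteroclinic orbit leaving $p$, expressed in the $N_i,\Sigma_\pm$ coordinates (away from the Kasner circle) and then read into Takens coordinates near $p$ and $p'$ via the two passages through the boundaries of $V_p$ and $V_{p'}$. Since the heteroclinic orbit is a compact arc sitting in the interior of the phase space, staying away from all equilibria except asymptotically at its endpoints, the flow is smooth there and $\Phi_p^{glob}$ is automatically $C^1$ on a small enough neighbourhood $\mathcal{V}_p$ of $H_B \cap \Sigma_p^{out}$; it is a diffeomorphism onto its image because it is a restriction of a (locally invertible) flow map composed with the two transverse-section identifications. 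Taking $\mathcal{V} = \bigcup_k \mathcal{V}_{p_k}$ and using that the base points are finite gives the first assertion.

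For the geometric content — items (1) and (2) — the key observation is the one already emphasized in the text: the center direction corresponds to motion \emph{along} the Kasner circle $\mathcal{K}$, and the Bianchi-II heteroclinic flow maps $\mathcal{K}$-points to $\mathcal{K}$-points (the Kasner map). First I would note that $\alpha$ and $\omega$ are local $C^1$-isometries for the metric induced by $g_B$, as established in the paragraph preceding the theorem. Then I would express the center-to-center part of $d\Phi_p^{glob}$ through the commuting diagram
\[
\omega \circ \Phi_p^{glob} = \kappa_{Kasner} \circ \alpha
\]
on $H_B \cap \Sigma_p^{out}$, where $\kappa_{Kasner}$ is the Kasner map on $\mathcal{K}$ read in the center coordinate. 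Differentiating at $r \in H_B \cap \Sigma_p^{out}$ and using that $d\alpha$, $d\omega$ are isometries carrying $W^c$, $V^c$ isomorphically onto $T\mathcal{K}$, one gets that $d\Phi_p^{glob}(r)$ carries $W_r^c$ onto $V_{\Phi_p^{glob}(r)}^c$ (this is item (1)) and that the norm distortion $\|d\Phi_p^{glob}(r)(w)\|_{g_B}/\|w\|_{g_B}$ equals exactly $|\kappa_{Kasner}'|$ evaluated at the corresponding Kasner point. So item (2) reduces to the purely one-dimensional statement that the Kasner map is expanding at every base point, with a uniform constant $\kappa > 1$ over the finite set $B$.

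That last reduction is where the real work — and the main obstacle — lies: one must show $|\kappa_{Kasner}'| > 1$ at each base point $u = u_m = [m, a_1, a_2, \ldots]$ of the chain. Here I would use the explicit form of the Kasner map recalled in Section \ref{Details-Section-Formulas}: on $[2,\infty]$ it is $u \mapsto u-1$, which has derivative $1$ (borderline!), while on $[1,2]$ it is $u \mapsto 1/(u-1)$, with derivative $-1/(u-1)^2$, whose absolute value exceeds $1$ precisely on $(1,2)$. Thus a single step is only weakly (non-strictly) expanding in the "subtract one" regime and strictly expanding in the "invert" regime. The resolution is to pass to the \emph{return} map to a fixed In-section, i.e. to compose the full cycle of $n$ elementary Kasner steps corresponding to one period of the heteroclinic chain: by the chain rule the derivative of the return map is the product of the elementary derivatives, and since a genuine heteroclinic cycle must contain at least one "invert" step (an excursion near a Taub point with $u$ crossing into $(1,2)$), the product is strictly bigger than $1$; taking the minimum over the finitely many base points gives the uniform $\kappa > 1$. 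I expect the delicate points to be (i) checking that "period" of the chain indeed forces at least one strictly-expanding elementary step — this is where the admissibility hypotheses and the structure of the continued fraction expansion enter — and (ii) bookkeeping the identification of the center coordinate $x^c$ in the Takens charts with the Kasner parameter $u$ consistently across the two sections at each base point, so that the isometry claim for $\alpha,\omega$ can legitimately be invoked.
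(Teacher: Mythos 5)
Your overall architecture matches the paper's: smooth ($C^1$, in fact analytic) dependence of the Wainwright--Hsu flow on initial conditions gives the $C^1$-diffeomorphism statement near the compact heteroclinic arcs, and the center-direction claims are reduced, via the maps $\alpha$ and $\omega$ being local $C^1$-isometries for $g_B$ and the relation $\omega\circ\Phi_B^{glob}=f\circ\alpha$ on $H_B\cap\Sigma_B^{out}$, to the expansion of the Kasner map $f$ on $\mathcal{K}$ at the base points. Up to that point you are in step with the paper.

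The gap is in how you then establish the expansion. You compute the derivative of $f$ in the Kasner parameter $u$, find that the branch $u\mapsto u-1$ has $u$-derivative exactly $1$, and propose to repair this by composing over a full period of the chain. That repair does not prove the theorem as stated: item (2) asserts $\exists\kappa>1$ with $\|d\Phi_B^{glob}(r)(w)\|_{g_B}\ge\kappa\|w\|_{g_B}$ for the \emph{single} global passage at every $r$, not merely for the return composition over one period, and the subsequent hyperbolic-structure argument uses exactly this pointwise form. More importantly, the difficulty you are trying to fix is an artifact of the coordinate: the relevant norm is $g_B$ restricted to $\mathcal{K}$ (equivalently the ambient metric $h$ on the Kasner circle), and $u$ is not an isometric parametrization of $\mathcal{K}$. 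Near a Taub point the arclength distance scales like a constant times $1/u$, so the branch $u\mapsto u-1$ has metric derivative of order $u^2/(u-1)^2>1$; in the circle metric the Kasner map is strictly expanding everywhere except at the Taub points themselves, where the expansion degenerates to $1$. This is precisely what the paper invokes: since the chain is periodic, its finitely many base points keep a minimal distance from the Taub points, and a uniform $\kappa>1$ for the one-step map follows immediately. Consequently your auxiliary concern (i) --- that a cycle must contain at least one ``invert'' step --- is not needed, and leaning on it would weaken the conclusion below what the theorem and the later return-map argument require.
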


\begin{proof}

We know that for Ordinary Differential Equation with differentiable
($C^{k}-)$vector field, there is a differentiable ($C^{k}-)$dependence
of the solution on the initial conditions (see e.g. \cite{ode-reference}). This means that in general, for any ``time-t-map''
of a differentiable flow, for fixed $t=t^{*}$ and an open subset
$U\subset\mathbb{R}^{n}$ of the phase space, we get a diffeomorphism
onto its image:

\begin{eqnarray*}
\phi_{t^{*}}:\mathbb{R}^{n} & \rightarrow & \mathbb{R}^{n}\\
U & \rightarrow & \phi_{t^{*}}(U)
\end{eqnarray*}
The Wainwright-Hsu vector field $X^{W}$ is polynomial, hence analytic,
that's why its flow $\phi_{t}(x_{0})$ does depend in a differential
(and even analytic) way on the inital condition. This means that the
map $\Phi_{p}^{glob}:\Sigma_{p}^{out}\rightarrow\Sigma_{f(p)}^{in}$
is a $C^{1}$-map and a diffeomorphism onto its image, as claimed
for the hyperbolic directions. We are left to show the second part
of the theorem, dealing with the center directions. Now let $q\in H_B\cap\Sigma_{B}^{out}$.
Then we observe that $\omega(\Phi^{glob}(q)=\omega(q)=f(\alpha(q)$,
where $f$ stands for the Kasner map. Because we have shown that both
$\alpha$ and $\omega$ are local $C^{1}$-isometries w.r.t. $g_{B}$,
we are left to prove that

\[
\exists\kappa>1:\forall p\in B,\forall v\in T_{p}\mathcal{K}:||df(p)(v)||_{g}\ge\kappa\cdot|v|_{g}
\]

, which follows directly from the definition of the Kasner map, as we consider a periodic chain which clearly keeps a minimal distance from the Taub points, where $f$ is not expanding. 

These observations hold for the global passage $\Phi_{p}^{glob}:\Sigma_{p}^{out}\rightarrow\Sigma_{f(p)}^{in}$
at any admissible base point $p\in B$, and therfor also for the collection
$\Phi_{B}^{loc}:\Sigma_{B}^{in}\rightarrow\Sigma_{B}^{out}$.
\end{proof}

\subsection{The Return Map and the Hyperbolic Structure}

As a consequence, we get the following result:
\begin{thm}
The return map $\Phi_{B}^{return}=\Phi_{B}^{glob}\circ\Phi_{B}^{loc}:\Sigma_{B}^{in}\rightarrow\Sigma_{B}^{in}$
is a $C^{1}$-map that satisfies, for $q\in H_B\cap\Sigma_{B}^{in}$\end{thm}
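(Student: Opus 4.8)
The plan is to read off the statement as a direct consequence of the two preceding theorems — the one on the local passage $\Phi_B^{loc}$ and the one on the global passage $\Phi_B^{glob}$ — combined with the chain rule. First I would settle the domain bookkeeping. By the global passage theorem, $\Phi_B^{glob}$ is a $C^1$-map and a diffeomorphism onto its image only on a neighbourhood $\mathcal{V}$ of $H_B\cap\Sigma_B^{out}$ in $\Sigma_B^{out}$; since $\Phi_B^{loc}$ is $C^1$ and carries $H_B\cap\Sigma_B^{in}$ into $H_B\cap\Sigma_B^{out}$, after shrinking the in-section $\Sigma_B^{in}$ (equivalently, shrinking the parameter $\epsilon$ in the definition of the boxes $V_{p_k}$) we may assume $\Phi_B^{loc}(\Sigma_B^{in})\subset\mathcal{V}$. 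Then $\Phi_B^{return}=\Phi_B^{glob}\circ\Phi_B^{loc}$ is a composition of $C^1$-maps, hence $C^1$; and because $B=\{p_1,\dots,p_n\}$ is finite and the heteroclinic chain is periodic, the Kasner map sends the box at $p_k$ to the box at its successor, so $\Phi_B^{return}$ is genuinely a self-map of $\Sigma_B^{in}$ permuting the pieces $\Sigma_k^{in}$.

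Next, fix $q\in H_B\cap\Sigma_B^{in}$ and write, by the chain rule, $d\Phi_B^{return}(q)=d\Phi_B^{glob}\bigl(\Phi_B^{loc}(q)\bigr)\circ d\Phi_B^{loc}(q)$. Using the splitting $T_q\Sigma_B^{in}=V_q^h\oplus V_q^c$: on the hyperbolic part, the local passage theorem gives $d\Phi_B^{loc}(q)(v)=0$ for all $v\in V_q^h$, so $d\Phi_B^{return}(q)$ vanishes identically on $V_q^h$, which is the claimed super-linear contraction in the hyperbolic directions. On the center part, the local passage theorem gives $d\Phi_B^{loc}(q)(V_q^c)=W^c_{\Phi_B^{loc}(q)}$ together with the isometry $\|d\Phi_B^{loc}(q)(v)\|=\|v\|$; composing with the global passage theorem, which gives $d\Phi_B^{glob}(r)(W_r^c)=V^c_{\Phi_B^{glob}(r)}$ and $\|d\Phi_B^{glob}(r)(w)\|_{g_B}\ge\kappa\|w\|_{g_B}$ for some $\kappa>1$, one obtains $d\Phi_B^{return}(q)(V_q^c)=V^c_{\Phi_B^{return}(q)}$ and $\|d\Phi_B^{return}(q)(v)\|_{g_B}\ge\kappa\|v\|_{g_B}$ for all $v\in V_q^c$. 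Thus the return map has a dominated splitting with super-linearly contracted hyperbolic fibres and a $\kappa$-expanded center.

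The one point that needs care — and which I regard as the main \emph{obstacle}, though it is more bookkeeping than genuine difficulty — is the compatibility of the metrics in which the two estimates are phrased: the local-passage isometry is stated with respect to the local Takens metric $g_p$, while the global-passage expansion is stated with respect to the global metric $g_B$. Since $g_B\restriction U_p=g_p$ for every $p\in B$ by the definition (\ref{global-metric}), and since both the in- and out-sections entering $\Phi_B^{loc}$ lie inside the Takens boxes $V_{p_k}\subset U_{p_k}$ where the vector field is in the normal form of Theorem \ref{thm:local-form-vf}, the two metrics agree on all tangent vectors appearing in the computation, so every norm above may be read uniformly with respect to $g_B$ and the constant $\kappa$ of the global passage theorem serves unchanged as the center-expansion constant of the return map. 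A secondary check is that the expansion constant can be taken uniform over the (finitely many) base points, which is immediate since $B$ is finite and the chain keeps a fixed positive distance from the Taub points. This yields the $C^1$-hyperbolic structure to which the graph-transform / stable-manifold argument of the following subsection is then applied.
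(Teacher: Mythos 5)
Your proposal is correct and follows essentially the same route as the paper: the chain rule $d\Phi_{B}^{return}(q)=d\Phi_{B}^{glob}(\Phi_{B}^{loc}(q))\circ d\Phi_{B}^{loc}(q)$ applied to the two preceding theorems, with the local passage killing the hyperbolic directions and the center expansion inherited from $\Phi_{B}^{glob}$ since $\Phi_{B}^{loc}$ is a center isometry. Your extra bookkeeping (shrinking $\epsilon$ so that $\Phi_{B}^{loc}(\Sigma_{B}^{in})\subset\mathcal{V}$, and noting $g_{B}\restriction U_{p}=g_{p}$ so both estimates read in the same metric) only makes explicit points the paper leaves implicit.
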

\begin{itemize}
\item $\Phi_{B}^{return}$ contracts super-linearly in the hyperbolic directions,
i.e. $d\Phi_{B}^{return}(q)(v)=0 \forall v\in V_{q}^{h}$
\item $\Phi_{B}^{return}$expands in the center direction, i.e. $\exists\kappa>1:\\||d\Phi_{B}^{return}(q)(v)||_{g_{B}}\ge\kappa||v||_{g_{B}}\forall v\in V_{q}^{c}$\end{itemize}
\begin{proof}
We recall the main idea behind our construction: We have shown that for the hyperbolic directions, the local passage is a contraction, while the global passsage is a diffeomorphism. Because of the differential dependence of a solution of an ODE on the initial conditions, the passage time for global passage near a heteroclinic orbit depends in a $C^{1}$-way on the base point considered. When approaching the attactor, it remains bounded, while the passage
time for the local passage tends to infinity. That's why the local passage dominates, and we get a contraction in the hyperbolic directions. In the center direction, the local passage is the identity in our local coordinate system, which yields the claimed expansion when combined with the global passage which expands the center direction. More formally, we use the chain rule $d\Phi_{B}^{return}(v)=d\Phi_{B}^{glob}(\Phi_{B}^{loc})\circ d\Phi_{B}^{loc}(v)$%
{} to get the claims directly from our theorems above, for $q\in H_B\cap\Sigma_{B}^{in}$:

$d\Phi_{B}^{return}(q)(v)=0 \; \forall v\in V_{q}^{h}$

$||d\Phi_{B}^{return}(q)(v)||_{g_{B}}\ge\kappa||v||_{g_{B}} \forall v\in V_{q}^{c}$
\end{proof}
The theorem above means that our return map $\Phi_{B}^{return}$ has a
$C^1$-hyperbolic structure on the set $H_B\cap\Sigma_{B}^{in}$, i.e.
that it is a hyperbolic set.
Via Theorem \ref{thm-takens-appendix} (described below), this $C^1$-hyperbolic structure leads to a $C^1$-stable-manifold.


To make this more precise, consider a point $p \in B$ and observe that the heteroclinic orbit $H_{p,f(p)}$ intersects $\Sigma_{B}^{in}$ in exactly one point that we denote by $q$. We also note that $q \in (H_B \cap \Sigma_{B}^{in})$, i.e. it belongs to our hyperbolic set. Theorem \ref{thm-takens-appendix} yields a $C^1$-embedded 2-dimensional stable manifold $W_{\epsilon}^{s}(\Phi,q)$ in $\Sigma_{B}^{in}$.  And as we know that the orbits of the Bianchi IX flow are transversal to $\Sigma_{B}^{in}$, we obtain a 3-dimensional stable manifold for the base point $p$ on the Kasner circle as claimed (compare \cite{beg10}, p. 22).

In summary, we arrive at the following theorem, which is equivalent to Theorem \ref{main-thm-bix}:

\begin{thm} (Stable Manifolds for Points in $B$)
\label{main-thm-detailed}

Let $p\in B$, where $B$ is the set of base points of a periodic
heteroclinic chain that satisfies the Sternberg Non-Resonsonance-Conditions. Then there exists a three dimensional $C^1$-stable manifold $W^{s}(p)$ of initial conditions such that the corresponding vacuum Bianchi IX - solutions converge to the periodic heteroclinic chain towards the big bang.




\end{thm}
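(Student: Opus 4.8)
The plan is to assemble the three passage results of this section into a single statement about the Bianchi~IX flow. First I would fix $p\in B$ and recall that, by hypothesis, $B$ is the set of base points of an admissible periodic chain satisfying the Sternberg Non-Resonance-Conditions, so Theorem~\ref{thm:local-form-vf} applies at every $p_k\in B$: in a Takens-neighbourhood $U_{p_k}$ there is a $C^1$-coordinate system in which the Wainwright--Hsu vector field is linear with $\lambda_{ss}(x^c)<\lambda_s(x^c)<0<\lambda_u(x^c)$. This reduces the whole problem to the analysis of the return map $\Phi_B^{return}=\Phi_B^{glob}\circ\Phi_B^{loc}$ on the collection of in-sections $\Sigma_B^{in}$, which the local- and global-passage theorems have already carried out.

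Next I would feed those two theorems into the chain rule. The local passage is a $C^1$-map that is the identity in the center direction and kills the hyperbolic directions (super-linear contraction, since $-\lambda_s/\lambda_u,-\lambda_{ss}/\lambda_u>1$ in Bianchi~IX); the global passage is a $C^1$-diffeomorphism onto its image which, through the isometries $\alpha,\omega$ and the expansivity of the Kasner map $f$ away from the Taub points, stretches the center direction by a factor $\kappa>1$. Composing, $\Phi_B^{return}$ contracts super-linearly along $V_q^h$ and expands by $\kappa$ along $V_q^c$ at every $q\in H_B\cap\Sigma_B^{in}$; in other words $H_B\cap\Sigma_B^{in}$ is a $C^1$-hyperbolic set for $\Phi_B^{return}$, with the center direction playing the role of the unstable direction and the two hyperbolic directions the role of the stable directions of the return map.

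Then I would apply the abstract stable-manifold theorem for such sets, Theorem~\ref{thm-takens-appendix}. The heteroclinic orbit $H_{p,f(p)}$ meets $\Sigma_B^{in}$ in a single point $q$, and $q\in H_B\cap\Sigma_B^{in}$, so the theorem yields a $C^1$-embedded local stable manifold $W_\epsilon^s(\Phi,q)\subset\Sigma_B^{in}$ of dimension equal to the number of contracting directions, namely $2$; its points are precisely the initial conditions in the section whose forward $\Phi_B^{return}$-orbit converges to the chain. Since the Bianchi~IX flow is transversal to $\Sigma_B^{in}$, saturating $W_\epsilon^s(\Phi,q)$ by short flow segments produces a $3$-dimensional $C^1$-submanifold $W^s(p)$ of phase space, and every vacuum Bianchi~IX solution through $W^s(p)$ converges to the periodic heteroclinic chain towards the big bang (compare \cite{beg10}, p.~22), which is the assertion.

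The only genuinely delicate point — everything else is bookkeeping and the chain rule — is the uniformity of the hyperbolic estimates over the chain together with the fact that the bounded global passage time cannot compete with the local passage time, which tends to $+\infty$ as one approaches the attractor; this is what guarantees that the contraction along $V^h$ truly dominates and that the hyperbolic splitting is $C^1$ and $\Phi_B^{return}$-invariant in the sense Theorem~\ref{thm-takens-appendix} requires. Because $B$ is finite and keeps a positive distance from the Taub points, the ratios $\lambda_s/\lambda_u$, $\lambda_{ss}/\lambda_u$, the expansion constant $\kappa$, and the norms of the global-passage differentials are all bounded above and below uniformly, so this uniformity does hold and the argument closes.
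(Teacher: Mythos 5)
Your proposal follows essentially the same route as the paper: it assembles the Takens linearization (Theorem \ref{thm:local-form-vf}), the local-passage contraction and the global-passage center expansion via the chain rule into a $C^1$-hyperbolic structure for $\Phi_B^{return}$ on $H_B\cap\Sigma_B^{in}$, applies Theorem \ref{thm-takens-appendix} to obtain the $2$-dimensional $C^1$-stable disc at the intersection point $q$ of $H_{p,f(p)}$ with $\Sigma_B^{in}$, and then uses transversality of the flow to the section to produce the $3$-dimensional manifold $W^s(p)$. The uniformity remark (bounded global passage time versus diverging local passage time, and the distance from the Taub points for a periodic chain) matches the paper's own justification, so the argument is correct and no genuinely different ideas are introduced.
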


Combining this with Theorem \ref{thm-2per} and Definition \ref{adm-2per} on the admissibility of 2-periodic continued fraction developments leads immediately to Theorem \ref{2nd-main-thm-bix}.

Untill now, we have only dealt with periodic heteroclinic chains, as this was the "missing case" in the paper by Béguin, who was treating aperiodic chains. When we combine the two results, we can get 
$C^1$-stable manifolds for any points $p \in \mathcal{K}$ that do not contain "forbidden" base points in the closure of the orbit of $p$ under the Kasner map $f$, i.e.  $\overline{\{f^{n}(p)\}}\subseteq B_{\epsilon}^{T}$. For this we define $B^T_\epsilon$ to be the set of base points that satisfies the Non-Resonsonance-Conditions in order to allow for Takens Lineraization and keeps a minimum distance of $\epsilon$ from the Taub points. This second condition is trivially fullfilled for periodic chains and necessary in order to achive uniform rates of expansion/contraction for the hyperbolic structure. The reason is that both the expansion of the Kasner map as well as the contraction of the local passage breaks down at the Taub points. 



We can also elaborate a bit about what it means that solutions of Bianchi IX converges to a heteroclinic chain towards the big bang. For example, we can show that the Hausdorff distance between the heteroclinic orbits that are part of the chain and the respective piece of the Bianchi IX-orbit tends to zero. This follows from the continuity of the flow and the properties of the stable manifold (see \cite{beg10}, p.21). Thus the limit of the analysis presented here can be formulated as in Theorem \ref{thm-limit-analysis}.

\newpage

\subsection{$C^{1}$-Stable Manifolds for  $C^{1}$-Hyperbolic Sets}

We have shown that the global return map admits a $C^{1}$-hyperbolic
structure. Béguin then uses the following Theorem (see \cite{beg10}, p.18) to prove the existence of a  $C^{1}$-stable manifold: Theorem \ref{thm-takens-appendix} shows that a $C^{1}-$Hyperbolic Structure leads
to a $C^{1}$- stable manifold, where the "index $s$" of the hyperbolic set stands for the dimension of the stable subbundle of the tangen bundle $TM$ (i.e. $s=dim (X_{p})$ in the notation of Definition \ref{def-hyp-structure}). In addition, the theorem specifies the dependence of this
manifold on the base point as well as the convergence rate:

\begin{thm}
\label{thm-takens-appendix}
Let $\Phi:M\rightarrow M$ be a $C^{1}$map on a manifold $M$, and
$C$ be a compact subset of $M$ which is a hyperbolic set of index
s for the map $\Phi$. Then, for every $\epsilon$ small enough, for
every $q\in C$, the set

\[
W_{\epsilon}^{s}(\Phi,q):=\{r\in M|dist(\Phi^{n}(r),\Phi^{n}(q))\leq\epsilon\, for\, every\, n\geq0\}
\]

is a $C^{1}$embedded s-dimensional disc, tangent to $F_{q}^{s}$
at q, depending continuously on q (for the $C^{1}$topology on the
space of embeddings). Moreover, if $\mu$ is a contraction rate for
$\Phi$ on $C$, then there exists a constant $\kappa$ such that,
for every $\epsilon$ small enough, for every $q\in C$ and every
$r\in W_{\epsilon}^{s}(\Phi,q)$

\[
dist_{g}(\Phi^{n}(r),\Phi^{n}(q))\leq\kappa\mu^{n}
\]

\end{thm}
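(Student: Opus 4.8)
The plan is to realize $W^s_\epsilon(\Phi,q)$ as the graph of a $C^1$ map over the stable subbundle, built by the Hadamard graph transform together with a fiber–contraction argument; the latter is what makes the construction go through for a merely $C^1$ map $\Phi$, where uniform second–derivative bounds are unavailable.

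First I would fix adapted local data along $C$. Compactness and hyperbolicity of index $s$ give a continuous $D\Phi$–invariant splitting $T_qM=F^s_q\oplus F^u_q$ with $\dim F^s_q=s$, and — after replacing the metric by an adapted one — uniform rates $\|D\Phi|_{F^s_q}\|\le\mu<1$ and $\|(D\Phi|_{F^u_q})^{-1}\|\le\lambda^{-1}<1$ for all $q\in C$. Using charts along $C$ one writes $\Phi$ near $q\in C$ in coordinates $(x^s,x^u)\in F^s_q\times F^u_q$ as $(x^s,x^u)\mapsto(A_qx^s+r^s_q(x^s,x^u),\,B_qx^u+r^u_q(x^s,x^u))$, with $\|A_q\|\le\mu$, $B_q$ expanding by $\lambda$, and remainders whose first derivatives are uniformly small on the $\epsilon$–ball; choosing $\epsilon$ small is exactly what makes these derivative bounds small, using continuity of $D\Phi$ and compactness of $C$.

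Next I would run the graph transform on the complete metric space $\mathcal{G}$ of continuous families $(\sigma_q)_{q\in C}$ with each $\sigma_q\colon B^s_\epsilon(0)\subset F^s_q\to F^u_q$ Lipschitz of constant $\le 1$ and $\sigma_q(0)$ small, equipped with the sup distance. Define $\Gamma\sigma$ by taking $\mathrm{graph}(\Gamma\sigma)_q$ to be the component through $0$ of $\Phi^{-1}\!\big(\mathrm{graph}(\sigma_{\Phi(q)})\big)$ in the $\epsilon$–box. The cone estimates from the previous step show $\Gamma$ preserves $\mathcal{G}$ and contracts the sup distance by a factor $\le\mu\lambda^{-1}+O(\epsilon)<1$; its fixed point $(\sigma^\ast_q)$ satisfies $\Phi(\mathrm{graph}\,\sigma^\ast_q)\supset\mathrm{graph}\,\sigma^\ast_{\Phi(q)}$ near $0$. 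Forward invariance of these graphs, together with the uniform contraction along them and expansion off them, gives the two inclusions identifying $\mathrm{graph}\,\sigma^\ast_q$ with $\{r:\mathrm{dist}(\Phi^n r,\Phi^n q)\le\epsilon\ \forall n\ge0\}=W^s_\epsilon(\Phi,q)$, and iterating the same contraction estimate yields $\mathrm{dist}(\Phi^n r,\Phi^n q)\le\kappa\mu^n$.

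Finally, for the $C^1$ assertion and the tangency, I would invoke the \emph{fiber contraction theorem} of Hirsch--Pugh: lift $\Gamma$ to a transform on pairs $(\sigma,L)$, where $L_q$ is a candidate for $D\sigma_q$, that covers $\Gamma$ on the base and contracts the $L$–fiber by a factor $\le\mu\lambda^{-1}+O(\epsilon)<1$; the formula for this lift is obtained by formally differentiating the implicit definition of $\Gamma$, which needs only $C^1$ regularity of $\Phi$. The unique fixed point $(\sigma^\ast,L^\ast)$ is then the uniform limit of the lifted iterates applied to any $C^1$ initial datum $(\sigma_0,D\sigma_0)$, and since the lift tracks the derivative of $C^1$ sections this forces $\sigma^\ast$ to be $C^1$ with $D\sigma^\ast=L^\ast$; at a point $q\in C$ one has $\sigma^\ast_q(0)=0$ and $D\sigma^\ast_q(0)=0$ because the linear part is already block–diagonal, i.e. $W^s_\epsilon(\Phi,q)$ is tangent to $F^s_q$ at $q$. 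Continuity of $q\mapsto W^s_\epsilon(\Phi,q)$ in the $C^1$ topology comes from the fixed point being the uniform $C^1$ limit of iterates of a continuous initial datum, each continuous in $q$ on the compact set $C$. I expect this last, $C^1$, step to be the main obstacle: with $\Phi$ only $C^1$ one cannot control $D\sigma^\ast$ by an Arzel\`a--Ascoli or Lipschitz argument, so the whole construction must be arranged so that the fiber contraction theorem applies, and one must check carefully that the remainder–derivative bounds indeed make the fiber contraction factor uniformly less than one over all of $C$.
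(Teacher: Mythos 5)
Your sketch is essentially correct, but it takes a genuinely different route from the one this paper relies on. The paper does not prove Theorem \ref{thm-takens-appendix} itself: it quotes it from Palis--Takens \cite{takens93} and outlines the Hirsch--Pugh proof \cite{hpugh70}, whose distinctive step is a reduction of the hyperbolic-set case to a single hyperbolic fixed point --- one passes to the Banach manifold $\mathcal{M}$ of bounded maps $h:C\rightarrow M$, considers $\Phi_{*}(h)=\Phi\circ h\circ\Phi^{-1}$, shows the inclusion $i$ is a hyperbolic fixed point of $\Phi_{*}$, applies the fixed-point stable manifold theorem (graph transform, with the Fibre Contraction Theorem supplying $C^{k}$ regularity) inside $\mathcal{M}$, and finally recovers $W_{\epsilon}^{s}(\Phi,q)$ by evaluating the resulting stable manifold at $q$. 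You instead run the graph transform directly on continuous families of Lipschitz graphs $\sigma_{q}:B_{\epsilon}^{s}(0)\subset F_{q}^{s}\rightarrow F_{q}^{u}$ indexed by $q\in C$, and upgrade to $C^{1}$ via the fiber contraction theorem applied to the lifted transform on pairs $(\sigma,L)$; this is the parametrized Hadamard--Perron argument in the style of \cite{hpugh-shub-book}, \cite{shub-book} or \cite{rob95}. The two proofs share their analytic core (your space of families with the sup metric plays the role of Hirsch--Pugh's $\mathcal{M}$, and both lean on fiber contraction exactly because $\Phi$ is only $C^{1}$), but your organization buys something relevant in the present context: it only uses invertibility of $D\Phi$ on the unstable bundle, needed to solve the preimage equation for $x^{u}$, and never invertibility of $\Phi$ itself, whereas the conjugation $\Phi\circ h\circ\Phi^{-1}$ presupposes that $\Phi$ is invertible along $C$; since the return map of this paper is an endomorphism whose derivative vanishes on the hyperbolic directions, that is precisely the case that must be covered (which is why Palis--Takens formulate the result for endomorphisms). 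Conversely, the Hirsch--Pugh reduction delivers the continuous dependence on $q$ and the uniform rate $\kappa\mu^{n}$ as part of the fixed-point package, while in your approach these have to be (and in your sketch are) extracted from uniform $C^{1}$ convergence of the iterates and from iterating the contraction estimate along the invariant graphs. The step you flag as delicate --- making the fiber contraction factor uniformly smaller than one when $\Phi$ is merely $C^{1}$ --- is handled by the adapted metric and the uniform smallness of the remainder derivatives on the $\epsilon$-boxes, and it is the same point at which \cite{hpugh70} invokes the Fibre Contraction Theorem, so your outline addresses the right obstruction.
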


Béguin names the book \cite{takens93} by Palis and Takens (page 167) as a reference for Theorem \ref{thm-takens-appendix}. In this section of the Appendix
``Hyperbolicity: Stable Manifolds and Foliations'', the authors 
deal with hyperbolic sets for endomorphisms, but results are only
sketched and no proofs included. However, there are classic sources for stable
manifold theorems of hyperbolic sets: Partly based on an earlier paper
(\cite{hpugh68}), Hirsch and Pugh prove such a theorem in \cite{hpugh70},
which is a chapter of the book ``Global Analysis'' collecting
the proceedings a symposium held on the topic in Berkeley, California,
in 1968, and seems to be the first time such a result is proved.
We will introduce the theorem by Hirsch/Pugh below, it can be used instead of \ref{thm-takens-appendix} in order to prove our Theorem \ref{main-thm-detailed}.



\subsection{Generalized Stable Manifold Theorem by Hirsch/Pugh}

\begin{thm*}
(Generalized Stable Manifold Theorem) Let $U$ be an open set in a
smooth manifold $M(dim<\infty)$ and $f:U\rightarrow M$ a $C^{1}$-
map. Let $\Lambda\subset U$ be a compact hyperbolic set and call
the invariant splitting $T_{\Lambda}M=E_{1}\oplus E_{2}$. Then there
is a neighbourhood $V$ of $\Lambda$, and submanifolds $W^{s}(x),W^{u}(x)$
tangent to $E_{2}(x)$ and $E_{1}(x)$ respectively for each $x\in\Lambda$ such
that

\[
W^{s}(x)=\{y\in V|\lim_{n\rightarrow\infty}d(f\restriction V)^{n}y,f\restriction V)^{n}x)=0\}
\]

If $f$ is $C^{k}$, so is $W^{s}(x),$ and it depends continously
on $f$ in the $C^{k}$-topology. Moreover, $W^{s}(x)$ and its derivatives
along $W^{s}(x)$ up to order $k$ depend continously on $x$. In
addition, there exist numbers $K>0,\lambda<1$ \textup{such that if
$x\in\Lambda,z\in W_{x}$ and $n\in\mathbb{Z}_{+}$ then }the following
holds:

\[
d(f^{n}(x),f^{n}(z)\le K\lambda^{n}
\]

\end{thm*}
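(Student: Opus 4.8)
\emph{Overall strategy.} The plan is to prove this by the Hadamard--Perron graph-transform method, upgraded to the $C^k$ statements by the fiber-contraction (uniform-contraction) principle, which is the route of Hirsch and Pugh; the compactness of $\Lambda$ is used throughout to make every estimate uniform, and this uniformity is precisely what delivers the continuity statements. Since $f$ is only assumed to be a $C^1$ map I would treat the stable manifold $W^s(x)$ directly and obtain $W^u(x)$ by the symmetric argument applied to branches of backward orbits, using that $Df$ restricted to $E_1$ is expanding, hence an isomorphism onto its image; in the application to $\Phi_B^{return}$ only the stable manifold is actually needed, since there the hyperbolic set is a single periodic orbit of the return map. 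First I would extend the splitting $T_\Lambda M = E_1 \oplus E_2$ to a continuous (not necessarily $Df$-invariant) splitting on a neighbourhood of $\Lambda$ and pass to an adapted (Lyapunov) Riemannian metric in which $\|Df(x)v\| \le \lambda\|v\|$ on $E_2(x)$ and $\|Df(x)v\| \ge \mu\|v\|$ on $E_1(x)$ for constants $0 < \lambda < 1 < \mu$ and all $x \in \Lambda$; by continuity of $Df$ these estimates persist, with slightly worse constants, on invariant cone fields $\mathcal{C}^s \supset E_2$ and $\mathcal{C}^u \supset E_1$ over a smaller neighbourhood $V$, and after shrinking $V$ the map $f$ is, in exponential charts at points of $\Lambda$, a $C^1$-small perturbation of its linear part.

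\emph{Existence.} Next I would realize the local stable manifold as the fixed point of a contraction. Fix $x \in \Lambda$, set $x_n = f^n(x)$, and in exponential charts identify neighbourhoods of the $x_n$ with products $D^s_n \times D^u_n \subset E_2(x_n)\times E_1(x_n)$, writing the chart expression of $f$ as $z \mapsto Df(x_n)z + R_n(z)$ with $R_n$ of small Lipschitz constant. For a forward orbit $(z_n)_{n\ge 0}$ staying in the boxes, split $z_n = (z_n^s, z_n^u)$; the orbit equation is equivalent, among bounded sequences, to the Perron fixed-point equation recovering $z_n^s$ by forward summation from the stable datum $z_0^s \in D^s_0$ and $z_n^u$ by a backward summation (convergent because $Df$ expands $E_1$). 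The corresponding operator $\mathcal{T}_{x,z_0^s}$ on the complete space of bounded sequences is a contraction with rate controlled by $\max(\lambda, \mu^{-1})$ together with the $C^1$-smallness of the $R_n$, so it has a unique fixed point depending Lipschitz-continuously on $z_0^s$; setting $\sigma_x(z_0^s) := z_0^u$ defines a Lipschitz graph $W^s_\epsilon(x) = \{\exp_x(v,\sigma_x(v)) : v \in D^s_0\}$ with $\sigma_x(0) = 0$. A uniqueness argument then identifies $W^s_\epsilon(x)$ with the set of $y \in V$ whose forward orbit stays $\epsilon$-close to $\{x_n\}$, and the cone and adapted-norm estimates upgrade ``stays close'' to ``converges'', giving $d(f^n y, f^n x) \le K\lambda^n$ once the adapted norm is compared with $d$.

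\emph{Regularity and dependence.} For the regularity I would differentiate the Perron equation in the parameter $z_0^s$: this produces a new fixed-point problem for a sequence of linear maps which is again a uniform contraction, so by the uniform-contraction principle the fixed point is $C^1$ in $z_0^s$; hence $\sigma_x \in C^1$ and $D\sigma_x(0) = 0$, so $T_x W^s_\epsilon(x) = E_2(x)$ as claimed. Bootstrapping the same device on higher $r$-jets along the orbit --- the Hirsch--Pugh--Shub fiber-contraction scheme --- gives $W^s_\epsilon(x) \in C^k$ when $f \in C^k$, and, because the data $Df(x_n)$ and $R_n$ depend continuously on $x$ while all contraction constants are uniform over the compact set $\Lambda$, yields continuous dependence of $\sigma_x$ and its derivatives on $x$ and, in the $C^k$ topology, on $f$. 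Finally one globalizes by setting $W^s(x) = \bigcup_{n\ge 0}(f\restriction V)^{-n}\big(W^s_\epsilon(x_n)\big)$, whose characterization as $\{y \in V : d((f\restriction V)^n y,(f\restriction V)^n x) \to 0\}$ follows from the exponential estimate.

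\emph{Main obstacle.} The routine part is the existence of the invariant graphs, once the adapted metric and cone fields are in place; the genuine difficulty lies in the regularity and continuity statements --- establishing that $W^s(x)$ is $C^k$ and that it, together with its derivatives, varies continuously with $x$ and with $f$ in the $C^k$ topology. This requires running the fiber-contraction argument carefully on the bundle of $r$-jets along orbits, keeping track of the shrinking of chart domains under the graph transform and of the uniformity of every estimate over $\Lambda$; this is the technical core of the Hirsch--Pugh proof, and is where essentially all of the work goes.
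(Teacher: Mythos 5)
Your proposal is correct, but it follows a genuinely different route than the source this paper relies on. You prove the theorem orbitwise: adapted (Lyapunov) metric and invariant cone fields on a neighbourhood of the compact set $\Lambda$, exponential charts along each orbit $x_n=f^n(x)$, a Lyapunov--Perron fixed-point equation for bounded sequences whose solution gives the local stable graph $\sigma_x$, then the uniform-contraction principle for $C^1$ regularity and a fibre-contraction bootstrap on jets for $C^k$, with all continuity statements (in $x$ and in $f$) extracted from the uniformity of the estimates over $\Lambda$. The Hirsch--Pugh argument that the paper outlines instead splits into: (1) the unstable/stable manifold of a single hyperbolic fixed point of a Lipschitz perturbation of a hyperbolic linear map on a Banach space, via the graph transform $\Gamma_f$ and the contraction mapping principle; (2) $C^k$ regularity of that fixed graph via the Fibre Contraction Theorem; and (3) a globalization trick that reduces the hyperbolic-set case to case (1) by considering $f_*(h)=f\circ h\circ f^{-1}$ on the Banach manifold $\mathcal{M}$ of bounded maps $\Lambda\to M$, where the inclusion $i$ is a hyperbolic fixed point, and then setting $W^s(x)=\mathrm{ev}_x(\mathcal{W}^s)$. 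The trade-off: the functional-analytic reduction packages the entire family $\{W^s(x)\}_{x\in\Lambda}$ into one fixed-point problem, so continuous dependence on $x$ and on $f$ (in the $C^k$ topology) comes essentially for free from continuity of the fixed point and of evaluation, at the price of working on an infinite-dimensional manifold of maps; your Perron-type argument stays finite-dimensional and more elementary, and yields the tangency $T_xW^s_\epsilon(x)=E_2(x)$ and the exponential rate $d(f^nx,f^nz)\le K\lambda^n$ very directly, but you must carry the uniformity of every estimate and the continuity of the jets over $\Lambda$ by hand --- which you correctly identify as the technical core. Two small remarks: both routes use the fibre-contraction principle for the $C^k$ statements, so that ingredient is shared; and your treatment of $W^u(x)$ for a non-invertible $C^1$ map (branches of backward orbits) is only sketched, which is acceptable here since, as you note, only the stable manifolds are used for the return map $\Phi_B^{return}$.
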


In \cite{hpugh70}, the proof of the generalized stable manifold theorem is outlined as follows:

\begin{enumerate}
\item Let $E=E_{1}\times E_{2}$ be a Banach space; $T:E\rightarrow E$
a hyperbolic linear map expanding along $E_{1}$ and contracting along
$E_{2}$; $E(r)\subset E$ the ball of radius r, and $f:E(r)\rightarrow E$
a Lipschitz pertubation of $T\restriction E(r)$. The unstable manifold
$W$ for $f$ will be the graph of a map $g:E_{1}(r)\rightarrow E_{2}(r)$
which satisfies $W=f(W)\cap E(r)$. Then the following map
$\Gamma_{f}$ is considered (in a suitable function space $G$ of
maps $g$):\\
\\
 $graph[\Gamma_{f}(g)]=E(r)\cap f(graph[g])$\\
\\
i.e. $\Gamma_{f}$ is the graph transform of $g$ by $f$.
The fixed point $g_{0}$ of $\Gamma_{f}$ gives the unstable manifold
of $f$ - its existence is proved by the contracting map principle
if $f$ is sufficiently close to $T$ pointwise, and the Lipschitz
constant of $f-T$ is small enough.
\item If $f$ is $C^{k}$ so is $g_{0}$, which is proved by induction on
$k$. The successive approximations $\Gamma_{f}^{n}(g)$ converge
$C^{k}$ to $g_{0}$ - here the Fibre Contraction Theorem is used.
\item Let $\Gamma\subset U$ be a hyperbolic set. Let $\mathcal{M}$ be
the Banach manifold of bounded maps $\Lambda\rightarrow M$, and $i\in\mathcal{M}$
the inclusion of $\Lambda$. Let $\mathcal{U}=\{h\in\mathcal{M}|h(\Lambda)\subset U\}$.
Define $f_{*}:\mathcal{U}\rightarrow\mathcal{M}$ by\\
\\
$f_{*}(h)=f\circ h\circ f^{-1}$\\
\\
Then $f_{*}$has a hyperbolic fixed point at $i$. By the first point,
$f_{*}$ hast a stable manifold $\mathfrak{\mathcal{W}}^{s}\subset\mathcal{M}$.
For each $x\in\mathcal{M}$, define $W^{s}(x)=ev_{x}(\mathcal{W}^{s})=\{y\in M|y=\gamma(x)\textrm{ for some }\gamma\in\mathcal{W}^{s}\}$. This yields a system of stable manifolds for $f$ along
$\Lambda$
\end{enumerate}

Point (1) of the outline above involves a graph-transform of Lipschitz-graphs (see e.g. \cite{rob95}, and compare also \cite{lieetal10, lieetal12}, where it is described in detail how a graph transform can be used to prove Lipschitz-stable-manifolds in Bianchi models even without linearizing at the Kasner circle). Point (3) reduces the proof of a stable manifold for a hyperbolic set to the case of a fixed point, in a suitable chosen infinite-dimensional space (compare also \cite{takens93}, p.157).

\subsection{Differentiability of the Stable Manifold}
\label{diff-stable-mf}

In step (2) above, the differentiability of the stable manifold is proved by the Fibre Contraction Principle (see \cite{hpugh70}, p.136 or \cite{hpugh-shub-book}, p.25). As the differentiability of the stable manifold is the main point of our Theorem \ref{main-thm-bix}, we will comment a bit how this is done.
For the invariant section (which will be the desired stable manifold) to be differentiable,
it is not enough to obtain a fibre contraction. One important point is that it may not contract
more along the base space than along the fibres (compare  \cite{hpugh-shub-book}, p.26), otherwise
there are examples where there is no differentiable invariant section
(see e.g. \cite{rob95}, p. 435). That's why we need additional conditions
that assure that the contraction on fibres is stronger than the contraction
in the base space to prove a ``$C^{r}$ section Theorem'' (\cite{rob95},
p. 436).

An alternative approach is the method of cones (e.g. taken by Robinson \cite{rob95}, p.185).
As above, a stable manifold that is only Lipschitz is obtained in a first step, and then it is shown that the
obtained manifold is infact $C^{k}$ if the original map has this smoothness property (\cite{rob95}, p.194). 

Finally, the book \cite{shub-book} also contains stable manifold theorems both for fixed points (chapter 5) and hyperbolic sets (chapter 6), in an abstract setting similar to \cite{hpugh70}, and also deals with the differentiability question (see \cite{shub-book}, p.39).

\newpage

\chapter*{Discussion and Outlook}
\label{conl-outl}

We have shown that there are periodic heteroclinic chains in Bianchi IX for which there exisist
$C^{1}$- Stable - Manifolds of orbits that follow these chains towards the big bang. This result is new, and should be compared with the two existing rigorous results on stable manifolds for orbits of the
Kasner map in Bianchi IX: B\'eguin showed the existence of $C^{1}$- stable- manifolds for aperiodic orbits of the Kasner map (\cite{beg10}), while Liebscher and co-authors (\cite{lieetal10,lieetal12}) showed the existence of Lipschitz-stable-manifolds for arbitrary orbits of the Kasner map not accumulating at one of the Taub points (B\'eguin also had to demand the latter condition).

Our result significantly extends B\'eguins results, who had to exclude all orbits that are perioidic or accumulate on any periodic orbit, a limitation which we were able to overcome. The techniques by Liebscher et al are able to treat both periodic and aperiodic chains, but yielded only Lipschitz-manifolds, i.e. the leaves of the foliation have less regularity. 

But be aware that even though the stable manifolds constructed by B\'eguin and ourselves are $C^1$, this concerns only the regularity of the leaves of the foliation, and not the dependence on the base point. We do not get a $C^1$-foliation which would mean a $C^1$-dependece on the base point, but only a  $C^0$-dependence of the ($C^1$-)leaves in the  $C^1$-topology.

These aspects play a crucuial role when discussing the genericity of the foliation-results in BIX, i.e. how generic the set of initial conditions is both "down on the Kasner circle", as well as in the full space of trajectories. This involves delicate distinctions between topological vs. measure-theoretic genericity, and is subject of current research (for partial results, see \cite{reitru10}\footnote{in \cite{reitru10} it is shown that there are trajectories converging to every formal sequence given by a Kasner parameter $u$ with at most polynomially bounded continued fraction expansion. This covers a set of full measure on the Kasner circle, but this does not mean that the set of coresponding initial conditions in a neighborhood of the Kasner circle has full measure. The reason is that there are counterexamples, i.e. it is possible to construct foliations where a countable set of "leaves" is attached to a set of base points that has full measure in the base space.}).

\newpage

\newpage

\appendix


\label{appendix-BIX-NRC}

\section{Symbolic Computations with Mathematica} \label{appendixBIX}


\subsection{Constant Continued Fraction Expansion}  u=[a,a,...]\\

For u={[}m,a,a,...{]} and m=1...a, AND a= 1

m= 1 alpha= 16 beta= 4 k1= -1 k2= 1 k3= -1\\

For u={[}m,a,a,...{]} and m=1...a, AND a= 2

m= 1 alpha= 12 beta= 3 k1= 1 k2= 2 k3= -1

m= 2 alpha= 24 beta= 5 k1= -2 k2= 1 k3= -1\\

For u={[}m,a,a,...{]} and m=1...a, AND a= 3

m= 1 alpha= 11 beta= 3 k1= 3 k2= 3 k3= -1

m= 2 alpha= 19 beta= 4 k1= 1 k2= 3 k3= -1

m= 3 alpha= 33 beta= 6 k1= -3 k2= 1 k3= -1\\

For u={[}m,a,a,...{]} and m=1...a, AND a= 4

m= 1 alpha= 11 beta= 3 k1= 5 k2= 4 k3= -1

m= 2 alpha= 18 beta= 4 k1= 4 k2= 5 k3= -1

m= 3 alpha= 28 beta= 5 k1= 1 k2= 4 k3= -1

m= 4 alpha= 45 beta= 7 k1= -4 k2= 1 k3= -1\\

For u={[}m,a,a,...{]} and m=1...a, AND a= 5

m= 1 alpha= 11 beta= 3 k1= 7 k2= 5 k3= -1

m= 2 alpha= 18 beta= 4 k1= 7 k2= 7 k3= -1

m= 3 alpha= 27 beta= 5 k1= 5 k2= 7 k3= -1

m= 4 alpha= 39 beta= 6 k1= 1 k2= 5 k3= -1

m= 5 alpha= 59 beta= 8 k1= -5 k2= 1 k3= -1\\

For u={[}m,a,a,...{]} and m=1...a, AND a= 6

m= 1 alpha= 11 beta= 3 k1= 9 k2= 6 k3= -1

m= 2 alpha= 18 beta= 4 k1= 10 k2= 9 k3= -1

m= 3 alpha= 27 beta= 5 k1= 9 k2= 10 k3= -1

m= 4 alpha= 38 beta= 6 k1= 6 k2= 9 k3= -1

m= 5 alpha= 59 beta= 8 k1= 1 k2= 6 k3= -1

m= 6 alpha= 75 beta= 9 k1= -6 k2= 1 k3= -1\\

For u={[}m,a,a,...{]} and m=1...a, AND a= 7

m= 1 alpha= 11 beta= 3 k1= 11 k2= 7 k3= -1

m= 2 alpha= 18 beta= 4 k1= 13 k2= 11 k3= -1

m= 3 alpha= 27 beta= 5 k1= 13 k2= 13 k3= -1

m= 4 alpha= 38 beta= 6 k1= 11 k2= 13 k3= -1

m= 5 alpha= 51 beta= 7 k1= 7 k2= 11 k3= -1

m= 6 alpha= 67 beta= 8 k1= 1 k2= 7 k3= -1

m= 7 alpha= 93 beta= 10 k1= -7 k2= 1 k3= -1\\

For u={[}m,a,a,...{]} and m=1...a, AND a= 8

m= 1 alpha= 11 beta= 3 k1= 13 k2= 8 k3= -1

m= 2 alpha= 18 beta= 4 k1= 16 k2= 13 k3= -1

m= 3 alpha= 27 beta= 5 k1= 17 k2= 16 k3= -1

m= 4 alpha= 38 beta= 6 k1= 16 k2= 17 k3= -1

m= 5 alpha= 51 beta= 7 k1= 13 k2= 16 k3= -1

m= 6 alpha= 66 beta= 8 k1= 8 k2= 13 k3= -1

m= 7 alpha= 84 beta= 9 k1= 1 k2= 8 k3= -1

m= 8 alpha= 113 beta= 11 k1= -8 k2= 1 k3= -1\\

For u={[}m,a,a,...{]} and m=1...a, AND a= 9

m= 1 alpha= 11 beta= 3 k1= 15 k2= 9 k3= -1

m= 2 alpha= 18 beta= 4 k1= 19 k2= 15 k3= -1

m= 3 alpha= 27 beta= 5 k1= 21 k2= 19 k3= -1

m= 4 alpha= 38 beta= 6 k1= 21 k2= 21 k3= -1

m= 5 alpha= 51 beta= 7 k1= 19 k2= 21 k3= -1

m= 6 alpha= 66 beta= 8 k1= 15 k2= 19 k3= -1

m= 7 alpha= 83 beta= 9 k1= 9 k2= 15 k3= -1

m= 8 alpha= 103 beta= 10 k1= 1 k2= 9 k3= -1

m= 9 alpha= 135 beta= 12 k1= -9 k2= 1 k3= -1\\

\newpage

\subsection{2-Periodic Continued Fraction Expansion} u={[}a,b,...{]}\\
\label{appendix-2periodic}

Now use a= 2  and b= 3\\

For u=[m,a,b,a,b,...] and m=1...b

m= 1  alpha=  15  beta= 4   k1= 7   k2= 7   k3= -2

m= 2  alpha=  24  beta= 5   k1= 3   k2= 7   k3= -2

m= 3  alpha=  34  beta= 6   k1= -5   k2= 3   k3= -2\\

For u=[m,b,a,b,a,...] and m=1...a

m= 1  alpha=  11  beta= 3   k1= 2   k2= 5   k3= -3

m= 2  alpha=  19  beta= 4   k1= -7   k2= 2   k3= -3\\
 
Now use a= 3 and b= 5\\

For u={[}m,a,b,a,b,...{]} and m=1...b

m= 1 alpha= 11 beta= 3 k1= 23 k2= 17 k3= -3

m= 2 alpha= 23 beta= 5 k1= 23 k2= 23 k3= -3

m= 3 alpha= 33 beta= 6 k1= 17 k2= 23 k3= -3

m= 4 alpha= 46 beta= 7 k1= 5 k2= 17 k3= -3

m= 5 alpha= 60 beta= 8 k1= -13 k2= 5 k3= -3\\

For u={[}m,b,a,b,a,...{]} and m=1...a

m= 1 alpha= 11 beta= 3 k1= 13 k2= 13 k3= -5

m= 2 alpha= 18 beta= 4 k1= 3 k2= 13 k3= -5

m= 3 alpha= 27 beta= 5 k1= -17 k2= 3 k3= -5\\

Now use a= 1  and b= 2\\

For u={[}m,a,b,a,b,...{]} and m=1...b

m= 1  alpha=  16  beta= 4   k1= 2   k2= 3   k3= -1

m= 2  alpha=  25  beta= 5   k1= -1   k2= 2   k3= -1\\

For u={[}m,b,a,b,a,...{]} and m=1...a

m= 1  alpha=  12  beta= 3   k1= -3   k2= 1   k3= -2\\

Now use a= 2  and b= 4\\

 For u=[m,a,b,a,b,...] and m=1...b

 m= 1  alpha=  15  beta= 4   k1= 12   k2= 10   k3= -2

 m= 2  alpha=  24  beta= 5   k1= 10   k2= 12   k3= -2

 m= 3  alpha=  34  beta= 6   k1= 4   k2= 10   k3= -2

 m= 4  alpha=  47  beta= 7   k1= -6   k2= 4   k3= -2\\

 For u=[m,b,a,b,a,...] and m=1...a

 m= 1  alpha=  11  beta= 3   k1= 2   k2= 6   k3= -4

 m= 2  alpha=  18  beta= 4   k1= -10   k2= 2   k3= -4\\

\subsection{3-Periodic Continued Fraction Expansion} u={[}a,b,c,...{]}\\
\label{appendix-3periodic}

Use a= 1 and b= 1 and c=2\\

 For u={[}m,b,c,a,...{]} and m=1...a

 m= 1  alpha=  16  beta= 4   k1= 5   k2= -2   k3= 3\\

 For u={[}m,c,a,b,...{]} and m=1...b

 m= 1  alpha=  12  beta= 3   k1= 2   k2= -3   k3= 3\\

 For u={[}m,a,b,c,...{]} and m=1...c

 m= 1  alpha=  16  beta= 4   k1= -3   k2= -5   k3= 2

 m= 2  alpha=  24  beta= 5   k1= 3   k2= -3   k3= 2\\

Use a= 3 and b= 3 and c=2\\

For u={[}m,b,c,a,...{]} and m=1...a

m= 1 alpha= 11 beta= 3 k1= -23 k2= -22 k3= 7

m= 2 alpha= 19 beta= 4 k1= -10 k2= -23 k3= 7

m= 3 alpha= 33 beta= 6 k1= 17 k2= -10 k3= 7\\

For u={[}m,c,a,b,...{]} and m=1...b

m= 1 alpha= 15 beta= 4 k1= -22 k2= -23 k3= 7

m= 2 alpha= 24 beta= 5 k1= -7 k2= -22 k3= 7

m= 3 alpha= 34 beta= 6 k1= 22 k2= -7 k3= 7\\

For u={[}m,a,b,c,...{]} and m=1...c

m= 1 alpha= 11 beta= 3 k1= -7 k2= -17 k3= 10

m= 2 alpha= 23 beta= 5 k1= 23 k2= -7 k3= 10\\

Use a=b=c=1 (consistency check):\\

For u=[m,b,c,a,...] and m=1...a

m= 1  alpha=  16  beta= 4   k1= 2   k2= -2   k3= 2\\

For u=[m,c,a,b,...] and m=1...b

m= 1  alpha=  16  beta= 4   k1= 2   k2= -2   k3= 2\\

For u=[m,a,b,c,...] and m=1...c

m= 1  alpha=  16  beta= 4   k1= 2   k2= -2   k3= 2\\

Use a=b=c=3 (consistency check):\\

For u=[m,b,c,a,...] and m=1...a

m= 1  alpha=  11  beta= 3   k1= -30   k2= -30   k3= 10

m= 2  alpha=  19  beta= 4   k1= -10   k2= -30   k3= 10

m= 3  alpha=  33  beta= 6   k1= 30   k2= -10   k3= 10\\

For u=[m,c,a,b,...] and m=1...b

m= 1  alpha=  11  beta= 3   k1= -30   k2= -30   k3= 10

m= 2  alpha=  19  beta= 4   k1= -10   k2= -30   k3= 10

m= 3  alpha=  33  beta= 6   k1= 30   k2= -10   k3= 10\\

For u=[m,a,b,c,...] and m=1...c

m= 1  alpha=  11  beta= 3   k1= -30   k2= -30   k3= 10

m= 2  alpha=  19  beta= 4   k1= -10   k2= -30   k3= 10

m= 3  alpha=  33  beta= 6   k1= 30   k2= -10   k3= 10\\

\subsection{Pre-Periodic Sequences} u=[m,b,a,b,a,...] and u=[m,a,b,c,...]\\

Now use a= 3, b= 2  and M= 5 for u=[m,b,a,b,a,...], m=1...M\\

m= 1  alpha=  15  beta= 4   k1= 7   k2= 7   k3= -2

m= 2  alpha=  24  beta= 5   k1= 3   k2= 7   k3= -2

m= 3  alpha=  34  beta= 6   k1= -5   k2= 3   k3= -2

m= 4  alpha=  47  beta= 7   k1= -17   k2= -5   k3= -2

m= 5  alpha=  61  beta= 8   k1= -33   k2= -17   k3= -2\\

Now use a= 1, b= 1  and c= 2  and M= 3 for u=[m,a,b,c,...], m=1...M\\

m= 1  alpha=  16  beta= 4   k1= -3   k2= -5   k3= 2

m= 2  alpha=  24  beta= 5   k1= 3   k2= -3   k3= 2

m= 3  alpha=  35  beta= 6   k1= 13   k2= 3   k3= 2

\newpage

We can summarize our results from Mathematica as follows, where the smoothness of the coordinate change is set to one ($k=1$ in the Takens Theorem):
\begin{itemize}
\item for constant contiued fraction expansions the conditions are violated
in the cases $u=[m,a,a,...]$ for $a=1...9$, so there is no simple
infinite periodic heteroclinic chain with constant continued fraction
development. We see, for example, in the case $u=[1,1,...]$ of the 3-cycle,
it holds that $(k_{1},k_{2},k_{3})=(-1,1,-1)$, which means that $\lambda_{2}=\lambda_{1}+\lambda_{3}$,
which can be checked directly and serves as a consistency check.
\item for 2-periodic continued fractions like $u=[2,3,2,3,...]$ or $u=[3,5,3,5,...]$, the Resonance Sign Condition (RSC) is violated, i.e. Takens-Linearization is possible. But note that for this argument to work, we have to require the coefficients to be greater than one, even after cancelling out a possible common factor. This is illustrated  by the examples $u=[1,2,1,2,...]$ and $u=[2,4,2,4,...]$.
\item For $u={[}1,1,2,1,1,2,...{]}$, the RSC is also violated, illustrating the fact that we don't have to require the coefficients to be greater than one if the period is greater than two. For $u={[}3,3,2,3,3,2,...{]}$, the RSC is also violated. However, even without using this fact, the chain would qualify for Takens Linearization, as the sum of the order of the resonances is always greater than the required $\alpha$ at all base points.

\item We have also included the examples for $u=[a,b,c,a,b,c,...]$ with $a=b=c=1$ and $a=b=c=3$ as consistency check: the formulas remain correct, but due to a common factor in the resulting coefficients, there is an earlier resonance that we already found in the section on constant contined fraction expansions.

\item the 1-pre-periodic sequences $u=[3,1,1,2,1,1,2,...]$ and $u=[5,3,2,3,2,...]$ show that if the first coefficient $m$ is bigger than the ones that follow, it cannot be assured that the NRC's are met: In the first case, this fails for $m=3$, in the second case for $m=4$ and $m=5$, which means that Takens Linearization is not possible.

\end{itemize}

\end{document}